\theoremstyle{definition}
\newtheorem{lemma}{Lemma}[section]
\newtheorem{definition}[lemma]{Definition}
\newtheorem{proposition}[lemma]{Proposition}
\newtheorem{theorem}[lemma]{Theorem}
\newtheorem{corollary}[lemma]{Corollary}
\newtheorem{remark}{Remark}
\numberwithin{equation}{section}
\def\qed{\hfill{Q.E.D.}\smallskip}
\def\ac{{\textit {\textbf{Acknowledgement:}} }}
\title{Combinatorial Calabi Flows on Surfaces}
\author{Huabin Ge}
\date{}
\begin{document}
\maketitle

\begin{abstract}
For triangulated surfaces, we introduce the combinatorial Calabi flow which is an analogue of smooth Calabi flow. We prove that the solution of combinatorial Calabi flow exists for all time. Moreover, the solution converges if and only if Thurston's circle packing exists. As a consequence, combinatorial Calabi flow provides a new algorithm to find circle packings with prescribed curvatures. The proofs rely on careful analysis of combinatorial Calabi energy, combinatorial Ricci potential and discrete dual-Laplacians.
\end{abstract}
\section{Introduction}
In seeking constant curvature metric, E.Calabi studied the variational problem of minimizing the so-called ``Calabi energy" in any fixed cohomology class of K$\ddot{a}$hler metrics and proposed the Calabi flow (\cite{CA1}, \cite{CA2}). Hamilton introduced the Ricci flow technique (\cite{Ha1}), which has been used to solve the Poincar$\acute{e}$ conjecture. As to dimension two, \emph{i.e.}, smooth surface case, people proved that both Calabi flow and the normalized Ricci flow exist for all time $t\geq 0$. Moreover, these two flows converge to constant scalar curvature metric (see \cite{CHA1}, \cite{CHA2}, \cite{CHE}, \cite{CH1}, \cite{CHR}, \cite{Si}, and \cite{St}).

Given a triangulated surface, Thurston introduced the circle packing metric, which is a type of piecewise flat cone metric with singularities at the vertices. Thurston found that there are combinatorial obstructions for the existence of constant combinatorial curvature circle packing metric (see section 13.7 in \cite{T1}). In \cite{CL1}, Bennett Chow and Feng Luo defined combinatorial Ricci flow, which is the analogue of Hamilton's Ricci flow. Bennett Chow and Feng Luo proved that the combinatorial Ricci flow exists for all time and converges exponentially fast to Thurston's circle packing on surfaces. They reproved the equivalence between Thurston's combinatorial condition (see (1.3) in \cite {CL1}, or (\ref{combinatorial-condition}) in this paper) and the existence of constant curvature circle packing metric.

Inspired by the work of Bennett Chow and Feng Luo in \cite{CL1}, we consider in this paper the 2-dimensional combinatorial Calabi flow equation, which is the negative gradient flow of combinatorial Calabi energy. We interpret the Jacobian of the curvature map as a type of discrete Laplace operator, which comes from the dual structure of circle patterns. We evaluate an uniform bound for discrete dual-Laplacians, which implies the long time existence of the solutions of combinatorial Calabi flow. We prove that the combinatorial Calabi flow converges exponentially fast if and only if Thurston's combinatorial conditions are satisfied. The combinatorial Calabi flow tends to find Thurston's circle patterns automatically. As a consequence, we can design algorithm to compute circle packing metrics with prescribed combinatorial curvature. In fact, any algorithm minimizing combinatorial Calabi energy or combinatorial Ricci potential can achieve this goal.

\subsection{Circle packing metric}
Let's consider a closed surface $X$. A triangulation $T=(V,E,F)$ has a collection of vertices (denoted $V$), edges (denoted $E$), and faces (denoted $F$). A positive function $r:V\rightarrow (0,+\infty)$ defined on the vertices is called a circle packing metric and a function $\Phi: E\rightarrow [0, \pi/2]$ is called a weight on the triangulation. Throughout this paper, a function defined on vertices is regarded as a $N$-dimensional column vector and $N=V^{\#}$ is used to denote the number of vertices. Moreover, all vertices, marked by $v_{1},...,v_{N}$, are supposed to be ordered one by one and we often write $i$ instead of $v_i$. Thus we may think of circle packing metrics as points in $\mathds{R}^N_{>0}$, $N$ times of Cartesian product of $(0,\infty)$. A triangulated surface with wight $\Phi$ is often denoted as $(X,T,\Phi)$.

For fixed $(X,T,\Phi)$, every circle packing metric $r$ determines a piecewise linear metric on $X$ by attaching each edge $e_{ij}$ to a length $$l_{ij}=\sqrt{r^2_i+r^2_j+2r_ir_jcos(\Phi_{ij})}.$$ This length structure makes each face in $F$ isometric to an Euclidean triangle, more specifically, we can realize each face $\{i,j,k\}\in F$ as an Euclidean triangle with edge lengths $l_{ij},l_{jk},l_{ki}$ because the three positive numbers $l_{ij},l_{jk},l_{ki}$ (derived from $r_i$, $r_j$, $r_k$) must satisfy triangle inequalities (\cite{T1}, Lemma 13.7.2). Furthermore, the triangulated surface $(X,T)$ is composed by gluing many Euclidean triangles coherently.

\begin{figure}
\begin{minipage}[t]{0.48\linewidth}
\centering
\includegraphics[width=0.75\textwidth]{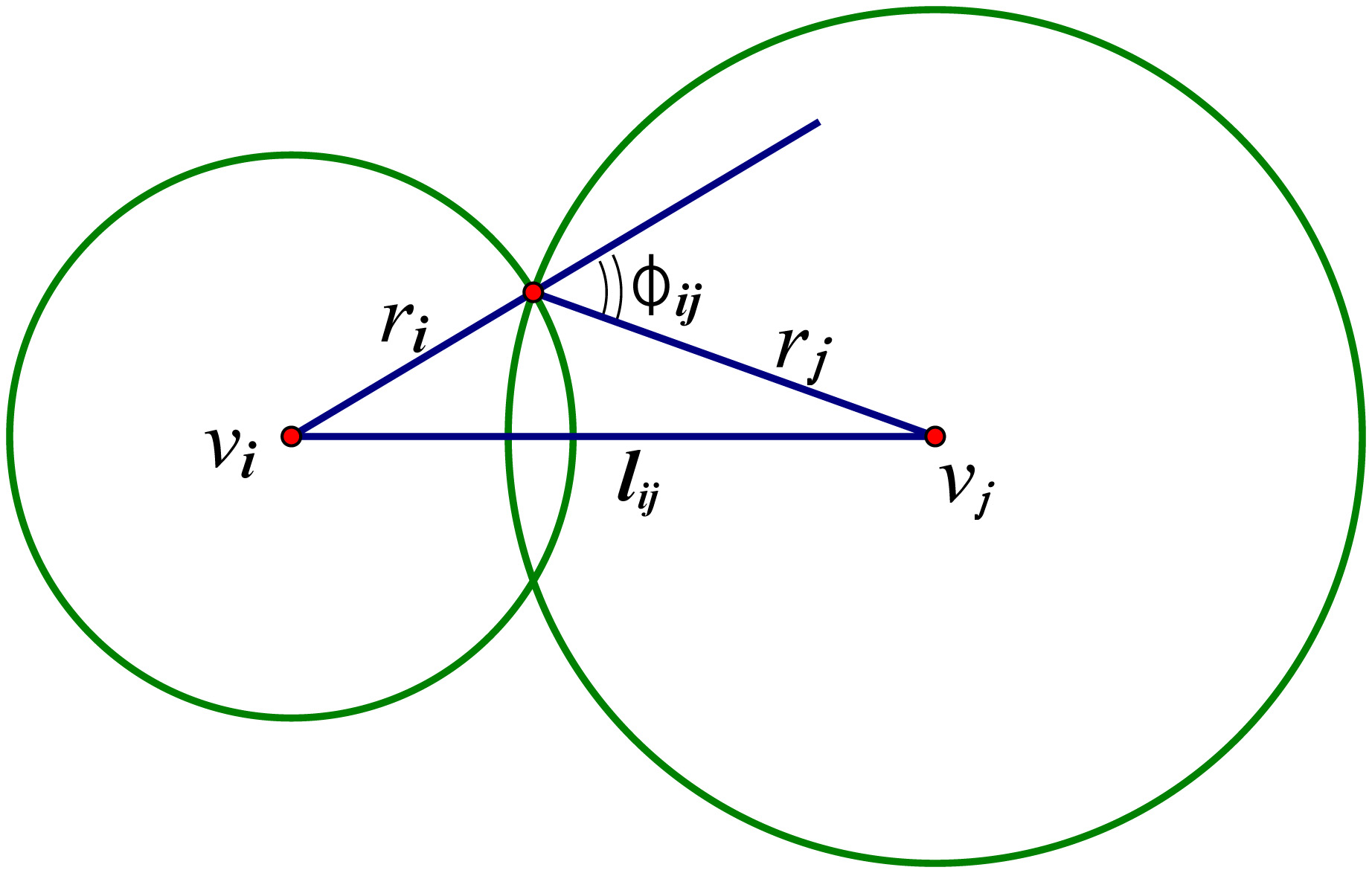}
\caption{circle packing metric}
\end{minipage}
\begin{minipage}[t]{0.5\linewidth}
\centering
\includegraphics[width=0.5\textwidth]{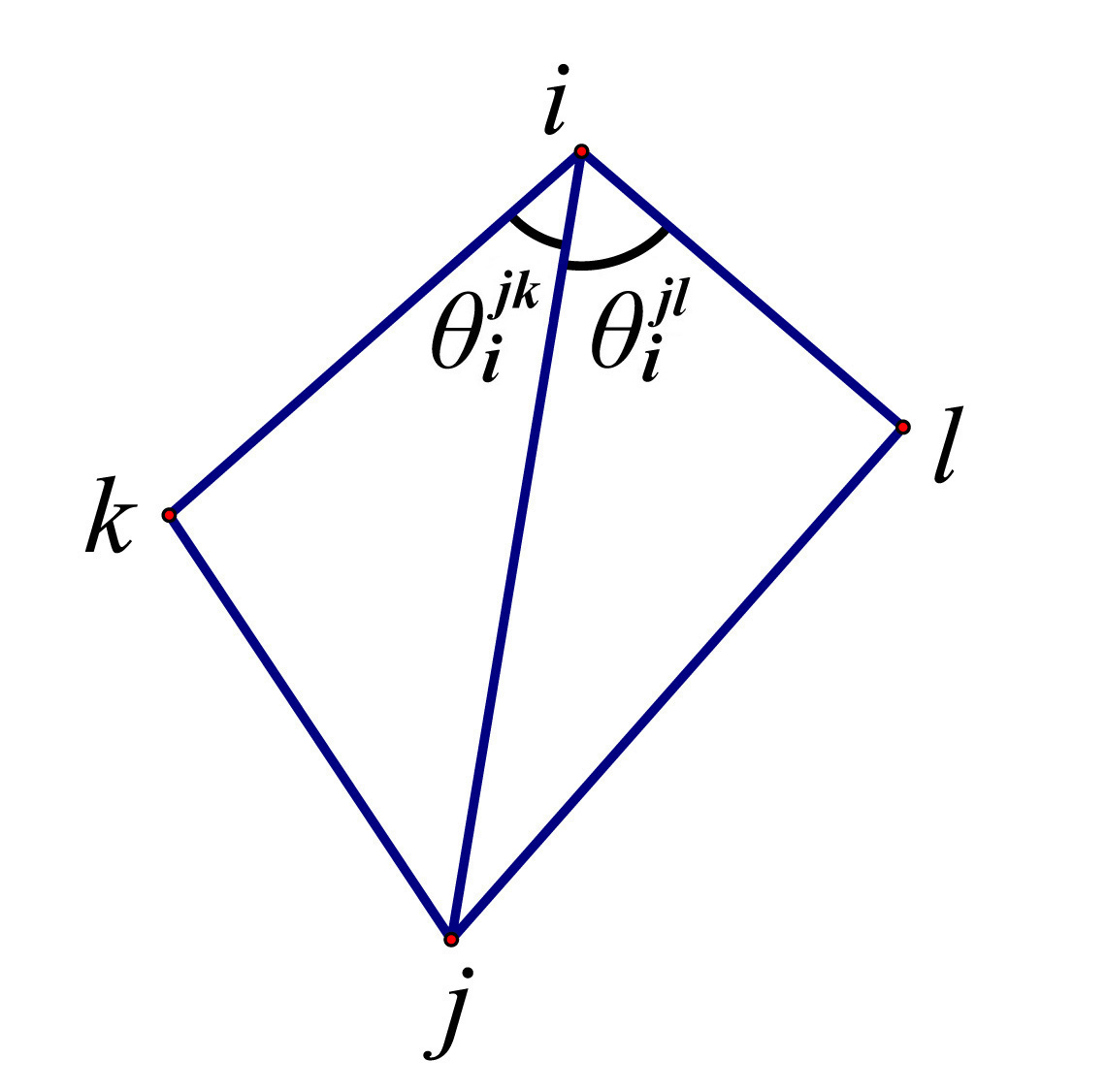}
\caption{two adjacent triangles}
\end{minipage}
\end{figure}

\subsection{Combinatorial curvature and constant curvature metric}
Given $(X,T,\Phi)$ and a circle packing metric $r$, all inner angles of the triangles are determined by $r_1,\cdots,r_N$. Denote $\theta^{jk}_i$ as the inner angle at vertex $i$ in the triangle $\{i,j,k\}\in F$, then the well known combinatorial (or ``discrete") Gauss curvature $K_i$ at vertex $i$ is
\begin{equation}
K_i=2\pi-\sum_{\{i,j,k\}\in F}\theta^{jk}_i
\end{equation}
where the sum is taken over each triangle having $i$ as one of its vertices. Notice that $\theta_i^{jk}$ can be calculated by cosine law, thus $\theta_i^{jk}$ and $K_i$ are elementary functions of the circle packing metric $r$.

For every circle packing metric $r$ on $(X,T,\Phi)$, we have the combinatorial Gauss-Bonnet formula (\cite {CL1})
\begin{equation}\label{Gauss-Bonnet formula}
\sum_{i=1}^NK_i=2\pi \chi(X).
\end{equation}
Notice that, above combinatorial Gauss-Bonnet formula (\ref{Gauss-Bonnet formula}) is still valid for all piecewise linear surface $(X,T,l)$, where $l:E\rightarrow(0,+\infty)$ is a piecewise linear metric, that is, the length structure $l$ makes each face in $F$ isometric to an Euclidean triangle. The average combinatorial curvature is $k_{av}=2\pi \chi(X)/N$. It will not change as the metric (circle packing metric or piecewise linear metric) varies, and it's a invariant that depends only on the topological (the Euler characteristic number $\chi(X)$) and combinatorial ($N=V^{\#}$) information of $(X,T)$.

Finding good metrics is always a central topic in geometry and topology. The constant curvature circle packing metric (denoted as $r_{av}$), a metric that determines constant combinatorial curvature $K_{av}=K(r_{av})=k_{av}(1,\cdots,1)^T$, seems a good candidate for privileged metric. Thurston first studied this class of metrics, and found that there are combinatorial obstructions for the existence of constant curvature metric (\cite{T1}). We quote Thurston's combinatorial conditions here.
\begin{lemma}
Given $(X,T,\Phi)$, where $X$, $T$ and $\Phi$ are defined as before. Then there exists constant combinatorial curvature circle packing metric if and only if the following combinatorial conditions are satisfied
\begin{equation} \label{combinatorial-condition}
2\pi\chi(X)\frac{|I|}{|V|}>-\sum_{(e,v)\in Lk(I)}(\pi-\Phi(e))+2\pi\chi(F_I)\,,\:\:\:\:\: \forall I: \phi \neq I \subsetneqq V,
\end{equation}
where $F_I$ is the subcomplex whose vertices are in $I$, $Lk(I)$ is the set of pairs $(e,v)$ of an edge $e$ and a vertex $v$ so that (1) the end points of $e$ are not in $I$ and (2) the vertex $v$ is in $I$ and (3) $e$ and $v$ form a face in $T$.
\end{lemma}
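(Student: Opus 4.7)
The plan is to prove the two directions by quite different arguments. Necessity follows from a direct Gauss--Bonnet calculation on the induced subcomplex $F_I$, while sufficiency invokes a variational principle applied to a suitable combinatorial energy functional.

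For the necessity, suppose a constant curvature metric $r_{av}$ exists, so $K_i = 2\pi\chi(X)/|V|$ for every $i$. Summing over $I$ gives the LHS of (\ref{combinatorial-condition}). From the definition of combinatorial curvature, $\sum_{i\in I} K_i = 2\pi|I| - S$, where $S$ is the total inner-angle sum at vertices in $I$. I would partition the triangles meeting $I$ by how many of their vertices lie in $I$: write $F_I^{(k)}$ for the set of triangles with exactly $k$ vertices in $I$ ($k=1,2,3$), so $F_I^{(3)}$ equals the set of faces of the induced subcomplex and $F_I^{(1)}$ corresponds bijectively to $Lk(I)$. Triangles in $F_I^{(3)}$ contribute exactly $\pi$ to $S$; triangles in $F_I^{(2)}$ contribute strictly less than $\pi$, since the angle at the vertex outside $I$ is positive; and triangles in $F_I^{(1)}$ contribute $\theta_v^e$, strictly bounded above by $\pi - \Phi(e)$ by the cosine-law analysis of the edge-length formula $l_{ij} = \sqrt{r_i^2 + r_j^2 + 2r_ir_j\cos\Phi_{ij}}$. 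Combining these with the identity $2|E_I| = 3|F_I^{(3)}| + |F_I^{(2)}|$ (every edge of the subcomplex lies in exactly two triangles of type $(3)$ or $(2)$) and the formula $\chi(F_I) = |I| - |E_I| + |F_I^{(3)}|$ yields (\ref{combinatorial-condition}); strictness uses the connectedness of $X$, which forces at least one of $Lk(I)$ and $F_I^{(2)}$ to be nonempty.

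For the sufficiency, I would pass to the logarithmic coordinates $u_i = \log r_i$. The essential analytic fact, due to Thurston and extended by Colin de Verdi\`ere, is that the Jacobian $(\partial K_i/\partial u_j)$ is symmetric and positive semi-definite with kernel spanned by $(1,\ldots,1)^T$ on the slice $\Sigma = \{\sum u_i = 0\}$. This lets one define the combinatorial Ricci potential
\begin{equation*}
F(u) = \int_{u_0}^{u} \sum_{i=1}^N (k_{av} - K_i)\, du_i,
\end{equation*}
a strictly convex $C^2$ function on $\Sigma$ whose critical points are exactly the constant-curvature metrics. Existence of a minimizer reduces to showing that $F$ is proper on $\Sigma$. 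For any nonzero direction $u \in \Sigma$, let $I = \{i : u_i = \max_j u_j\}$; as $t\to +\infty$, the circles at $I$-vertices blow up relative to the others, and the angles $\theta_v^e$ for $(e,v)\in Lk(I)$ tend to their supremum $\pi - \Phi(e)$. An asymptotic computation identifies the slope of $F(tu)$ at infinity with precisely the excess between the two sides of (\ref{combinatorial-condition}) for this set $I$; so Thurston's condition is exactly what forces $F(tu)\to +\infty$ in every direction, yielding properness and hence the desired constant-curvature metric.

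The main technical obstacle is this asymptotic step: one must pin down the exact growth rate of $F$ along an arbitrary ray in $\Sigma$ and recognize it as the left-minus-right side of (\ref{combinatorial-condition}) evaluated on the top-level set $I$. Carrying this out rigorously — in particular the cosine-law analysis of each $\theta_v^e$ as the radii degenerate to $0$ or $\infty$ — is what makes Thurston's combinatorial condition emerge naturally as the sharp threshold for existence.
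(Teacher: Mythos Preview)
The paper does not prove this lemma at all: it is quoted as Thurston's theorem (with the phrase ``We quote Thurston's combinatorial conditions here''), and later the equivalence $(3)\Leftrightarrow(4)$ is attributed to Thurston \cite{T1} and Chow--Luo \cite{CL1} without argument. So there is no in-paper proof to compare against; your proposal supplies a proof where the paper merely cites one.

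That said, your outline is essentially the standard Thurston/Chow--Luo argument, and is sound in structure. The necessity direction via the Gauss--Bonnet count on $F_I$ with the triangle partition $F_I^{(1)}\cup F_I^{(2)}\cup F_I^{(3)}$ is the usual computation; the key angle bound $\theta_v^e<\pi-\Phi(e)$ for a triangle with only the vertex $v$ in $I$ is exactly Lemma~2.3 of \cite{CL1}, and your edge identity $2|E_I|=3|F_I^{(3)}|+|F_I^{(2)}|$ is correct. For sufficiency, the variational scheme you describe --- strict convexity of the Ricci potential on the slice $\Sigma$ and properness forced by the combinatorial inequalities --- is precisely the mechanism in \cite{CL1}. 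One refinement worth noting: along a general ray in $\Sigma$ the coordinates may have several level sets, not just a single maximal one, so the asymptotic slope is governed by a telescoping sum of inequalities~(\ref{combinatorial-condition}) applied to the successive superlevel sets $I_1\subsetneq I_2\subsetneq\cdots$, rather than a single $I$. This is the only place your sketch is slightly too coarse, but the correction is routine and does not change the conclusion.
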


In \cite{CL1}, Bennett Chow and Feng Luo introduced the combinatorial Ricci flow which is an analogue of Hamilton's Ricci flow on surfaces (\cite{Ha}). They proved that the solutions of the normalized combinatorial Ricci flow converges exponentially fast to Thurston's constant curvature metric $r_{av}$. Combinatorial Ricci flow method is so powerful that, on one hand, the flow provides a new proof of Thurston's existence of constant circle packing metric theorem, on the other hand, the flow suggests a new algorithm to find circle packing metrics with prescribed curvatures.

In this paper, we introduce the combinatorial Calabi flow, which is the negative gradient flow of combinatorial Calabi energy. Combinatorial Calabi flow is an analogue of smooth Calabi flow on surface. We prove that the solution of combinatorial Calabi flow exists for $t\in[0,+\infty)$ by a careful estimation of discrete dual-Laplacian. Moreover, the solution of combinatorial Calabi flow converges to constant curvature metric $r_{av}$ if and only if $r_{av}$ exists.

\section{The 2-Dimensional Combinatorial Calabi Flow}
\subsection{Definition of combinatorial Calabi flow}
For smooth surface, the well known Calabi flow is $ \frac{\partial g}{\partial t}= \Delta K g$, where \emph{K} is Gauss curvature. Obviously, smooth Laplace-Beltrami operator is very important for smooth Calabi flow equation. Before giving the definition of combinatorial Calabi flow, we first need to define combinatorial Laplace operator, which is an analogue of smooth Laplace-Beltrami operator.

Set $u_i=\ln r_i$, $i=1,\cdots,N$. Then the coordinate transformation $u=u(r)$ maps $r\in\mathds{R}^N_{>0}$ to $u\in\mathds{R}^N$ homeomorphically (denote the inverse coordinate transformation as $Exp:\mathds{R}^N\rightarrow\mathds{R}^N_{>0},\,u\mapsto r$ for latter use). Similar to \cite{Ge1} and \cite{Ge2}, we interpret the discrete Laplacian to be the Jacobian of the curvature map $K=K(u)$.
\begin{definition}
Given $(X,T,\Phi)$, where $X$, $T$ and $\Phi$ is defined as before. The discrete dual-Laplace operator $``\Delta"$ is defined to be $\Delta=-L^T$, where
\begin{equation*}
  L=(L_{ij})_{N\times N}=\frac{\partial(K_1,...,K_N)}{\partial(u_1,...,u_N)}=
  \left(
\begin{array}{ccccc}
 {\frac{\partial K_1}{\partial u_1}}& \cdot & \cdot & \cdot &  {\frac{\partial K_1}{\partial u_N}} \\
 \cdot & \cdot & \cdot & \cdot & \cdot \\
 \cdot & \cdot & \cdot & \cdot & \cdot \\
 \cdot & \cdot & \cdot & \cdot & \cdot \\
 {\frac{\partial K_N}{\partial u_1}}& \cdot & \cdot & \cdot &  {\frac{\partial K_N}{\partial u_N}}
\end{array}
\right).
\end{equation*}
\end{definition}
Both $\Delta$ and $L$ acts on functions $f$ (defined on vertices, hence is a column vector) by matrix multiplication, \emph{i.e.}
\begin{equation*}
\Delta f_i=(\Delta f)_i=-(L^Tf)_i=-\sum_{j=1}^N\frac{\partial K_j}{\partial u_i}f_j=-\sum_{j=1}^N\frac{\partial K_j}{\partial r_i}r_if_j.
\end{equation*}
Discrete Laplacian defined here is exactly the famous discrete dual-Laplacian (\cite{G1}\cite{G3}), which will be explained carefully in Appendix \ref{Appendix-DDL}.
\begin{definition}
Given data $(X,T,\Phi)$, where $X$, $T$ and $\Phi$ is defined as before. The combinatorial Calabi flow is
\begin{equation} \label{Calabi-flow-r}
\cfrac{dr_i}{dt}=r_i \Delta K_i
\end{equation}
with initial circle packing metric $r(0)\in \mathds{R}^N_{>0}$, or equivalently,
\begin{equation} \label{Calabi-flow-u}
\cfrac{du_i}{dt}=\Delta K_i
\end{equation}
with $u(0)\in \mathds{R}^N$.
\end{definition}
It's more neatly to write combinatorial Calabi flow (\ref{Calabi-flow-r}) and (\ref{Calabi-flow-u}) in matrix forms. Denote $R=diag\{r_1,\cdots,r_N\}$, then flow (\ref{Calabi-flow-r}) turns to
\begin{equation} \label{Calabi-flow-r-matrix}
 \dot{r}=R\Delta K=-RL^TK,
\end{equation}
where the symbol $``\cdot"$ represents the time derivative. The flow (\ref{Calabi-flow-u}) turns to
\begin{equation} \label{Calabi-flow-u-matrix}
 \dot{u}=\Delta K=-L^TK.
\end{equation}
Notice that, equation (\ref{Calabi-flow-u-matrix}) is an autonomous ODE system.

\begin{remark}
Using the matrix language, Bennett Chow and Feng Luo's combinatorial Ricci flow (\cite{CL1}) is $\dot{u}=-K$. The normalized combinatorial Ricci flow is $\dot{u}=K_{av}-K$.
\end{remark}

\subsection{Main properties of combinatorial Calabi flow}
\begin{definition}
For data $(X,T,\Phi)$ with a circle packing metric $r$, the combinatorial Calabi energy is
\begin{equation}\label{Calabi-energy}
\mathcal{C}(r)=\|K-K_{av}\|^2=\sum_{i=1}^N (K_i-k_{av})^2
\end{equation}
\end{definition}

Consider the combinatorial Calabi energy $\mathcal{C}$ as a function of $u$, we obtain
\begin{displaymath}
\nabla_u \mathcal{C}=
\left(
\begin{array}{c}
 {\frac{\partial \mathcal{C}}{\partial u_1}} \\
 \cdot\\
 \cdot\\
 \cdot\\
 {\frac{\partial \mathcal{C}}{\partial u_N}}
\end{array}
\right)
=2
\left(
\begin{array}{ccccc}
 {\frac{\partial K_1}{\partial u_1}}& \cdot & \cdot & \cdot &  {\frac{\partial K_N}{\partial u_1}} \\
 \cdot & \cdot & \cdot & \cdot & \cdot \\
 \cdot & \cdot & \cdot & \cdot & \cdot \\
 \cdot & \cdot & \cdot & \cdot & \cdot \\
 {\frac{\partial K_1}{\partial u_N}}& \cdot & \cdot & \cdot &  {\frac{\partial K_N}{\partial u_N}}
\end{array}
\right)
\left(
\begin{array}{c}
 K_1 \\
 \cdot\\
 \cdot\\
 \cdot\\
 K_N
\end{array}
\right)=2L^TK.
\end{displaymath}

\begin{proposition}
Combinatorial Calabi flow (\ref{Calabi-flow-u}) (or (\ref{Calabi-flow-u-matrix})) is the negative gradient flow of combinatorial Calabi energy. Moreover, the calabi energy (\ref{Calabi-energy}) is descending along this flow.
\end{proposition}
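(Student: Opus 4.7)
The plan is to reduce both assertions to the gradient calculation already displayed in the text, after first using the combinatorial Gauss--Bonnet formula to clean up the role of the constant vector $K_{av}$. Writing $\mathcal{C}$ as a function of $u$ via the homeomorphism $u\mapsto r=\mathrm{Exp}(u)$, a direct chain-rule computation gives $\partial\mathcal{C}/\partial u_j = 2\sum_i(K_i-k_{av})\,\partial K_i/\partial u_j$. To reconcile this with the matrix identity $\nabla_u\mathcal{C}=2L^TK$ displayed just above the proposition, I would differentiate the Gauss--Bonnet identity (\ref{Gauss-Bonnet formula}) with respect to each $u_j$ to conclude $\sum_i\partial K_i/\partial u_j=0$, i.e.\ $L^T\mathbf{1}=0$; this forces $L^T K_{av}=k_{av}L^T\mathbf{1}=0$, and hence $L^TK=L^T(K-K_{av})$, which legitimises the simplification and produces the stated expression for the gradient.

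The first claim then follows immediately: the equation $\dot u=-L^TK$ becomes $\dot u=-\tfrac{1}{2}\nabla_u\mathcal{C}$, exhibiting the flow as the negative gradient flow of $\mathcal{C}$ (the constant factor $1/2$ is absorbable into a rescaling of time and does not alter the flow's trajectories in $\mathds{R}^N$). For the monotonicity statement I would simply plug in along a solution to obtain
\begin{equation*}
\frac{d\mathcal{C}}{dt}=\langle\nabla_u\mathcal{C},\dot u\rangle=\langle 2L^TK,\,-L^TK\rangle=-2\|L^TK\|^2\le 0,
\end{equation*}
so that $\mathcal{C}$ is non-increasing along the flow, with equality precisely when $L^TK=0$, i.e.\ at equilibria of the ODE system (\ref{Calabi-flow-u-matrix}).

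There is no real obstacle; the only subtle point is the Gauss--Bonnet reduction $L^T\mathbf{1}=0$, without which one could not directly identify the flow $\dot u=-L^TK$ with the (rescaled) negative gradient of $\|K-K_{av}\|^2$. Everything else is a one-line chain-rule computation evaluated along a trajectory.
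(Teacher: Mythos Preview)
Your proof is correct and follows essentially the same route as the paper: identify $\nabla_u\mathcal{C}=2L^TK$, rewrite $\dot u=-L^TK$ as $-\tfrac{1}{2}\nabla_u\mathcal{C}$, and differentiate $\mathcal{C}$ along a trajectory to get $\mathcal{C}'=-\tfrac{1}{2}\|\nabla_u\mathcal{C}\|^2=-2\|L^TK\|^2\le 0$. The only difference is that you make explicit the Gauss--Bonnet step $L^T\mathbf{1}=0$ (hence $L^TK_{av}=0$) that the paper silently absorbs into its displayed gradient computation; this is a helpful clarification rather than a different argument.
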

\begin{proof}
$\dot{u}(t)=-L^TK=-\frac{1}{2}\nabla_u \mathcal{C}$. Moreover, $\mathcal{C}'(t)=-\frac{1}{2}\|\nabla_u \mathcal{C}\|^2\leq0$.
\end{proof}\qed

Because $K_i$ and $\Delta K_i$ are elementary functions of $r_1,...,r_N$, the local existence of combinatorial Calabi flow (\ref{Calabi-flow-r}) follows from Picard's existence and uniqueness theorem in the standard ODE theory. By a careful estimation of discrete dual-Laplacian and combinatorial Ricci potential, the long time existence property is proved in section \ref{Long-existence-section}, the convergence property is proved in section \ref{convergence-section}. The main results in this paper is stated as follows
\begin{theorem}
Fix $(X,T,\Phi)$, where $X$ is a closed surface, $T$ is a triangulation, $\Phi\in[0,\pi/2]$ is a weight. For any initial circle packing metric $r(0)\in \mathds{R}^N_{>0}$, the solution of combinatorial Calabi flow (\ref{Calabi-flow-r}) exists for $t\in [0,\,+\infty)$. Moreover, the following four statements are mutually equivalent
\begin{description}
  \item[(1)] The solution of combinatorial Calabi flow (\ref{Calabi-flow-r}) converges.
  \item[(2)] The solution of combinatorial Ricci flow $\dot{u}=K_{av}-K$ converges.
  \item[(3)] There exists constant curvature circle packing metric $r_{av}$.
  \item[(4)] $2\pi\chi(X)\frac{|I|}{|V|}>-\sum_{(e,v)\in Lk(I)}(\pi-\Phi(e))+2\pi\chi(F_I)\,,\:\:\:\:\: \forall I: \phi \neq I \subsetneqq V.$
\end{description}
Furthermore, if any of above four statements is true, the solution of combinatorial Calabi flow (\ref{Calabi-flow-r}) converges exponentially fast to constant curvature circle packing metric $r_{av}$.
\end{theorem}

\section{Long Time Existence}\label{Long-existence-section}
\subsection{Discrete Laplacian for circle packing metrics}\label{Discrete-Dual-Laplac}
Any circle packing metric $r$ determines an intrinsic metric structure on fixed $(X,T,\,\Phi)$ by Euclidean cosine law. The lengths $l_{ij}$, angles $\theta_i^{jk}$ and curvatures $K_i$ are elementary functions of $r=(r_1, ... ,r_N)^T$. We write $j\sim i$ in the following if the vertices $i$ and $j$ are adjacent. For any vertex $i$ and any edge $j\sim i$, set
\begin{equation}
B_{ij}=\frac{\partial(\theta_i^{jk}+\theta_i^{jl})}{\partial r_j} r_j,
\end{equation}
then $B_{ij}=B_{ji}$, since $\frac{\partial\theta_i^{jk}}{\partial r_j} r_j=\frac{\partial\theta_j^{ik}}{\partial r_i} r_i$ (see Lemma 2.3 in \cite{CL1}).

\begin{proposition}
For any $1\leq i,j\leq N$ and $i\sim j$, we have
\begin{equation} \label{Bij}
 0<B_{ij}<2\sqrt{3}.
\end{equation}
\end{proposition}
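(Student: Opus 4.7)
The plan is to decompose $B_{ij}$ into the contributions of the two triangles $\{i,j,k\}$ and $\{i,j,l\}$ sharing the edge $e_{ij}$,
\[
B_{ij} \;=\; \frac{\partial \theta_i^{jk}}{\partial u_j} \;+\; \frac{\partial \theta_i^{jl}}{\partial u_j},
\]
and to prove the triangle-wise two-sided bound $0<\partial\theta_i^{jk}/\partial u_j<\sqrt{3}$ (and similarly for the other triangle), from which $0<B_{ij}<2\sqrt{3}$ follows at once by addition. The lower bound is the easy half: strict positivity $\partial\theta_i^{jk}/\partial r_j>0$ for two distinct vertices of a Euclidean circle-packing triangle with weights $\Phi\in[0,\pi/2]$ is a classical fact, established in the same Chow--Luo lemma \cite{CL1} that is invoked immediately above the statement for the symmetry $(\partial\theta_i^{jk}/\partial r_j)r_j=(\partial\theta_j^{ik}/\partial r_i)r_i$. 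Multiplying by $r_j>0$ preserves the sign, so each summand of $B_{ij}$ is strictly positive.

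For the upper bound I would first derive an explicit closed form for $\partial\theta_i^{jk}/\partial u_j$ inside a single triangle. The standard cosine-law derivatives
\[
\frac{\partial \theta_i}{\partial l_{jk}} \;=\; \frac{l_{jk}}{2A_{ijk}}, \qquad \frac{\partial \theta_i}{\partial l_{ij}} \;=\; -\frac{l_{jk}\cos\theta_j}{2A_{ijk}}
\]
(with $A_{ijk}$ the Euclidean area), combined with the circle-packing derivatives
\[
\frac{\partial l_{ij}}{\partial r_j} \;=\; \frac{r_j + r_i\cos\Phi_{ij}}{l_{ij}}, \qquad \frac{\partial l_{jk}}{\partial r_j} \;=\; \frac{r_j + r_k\cos\Phi_{jk}}{l_{jk}},
\]
yield by the chain rule a clean expression for $\partial\theta_i^{jk}/\partial u_j$ as an elementary function of the three radii $r_i,r_j,r_k$, the three weights $\Phi_{ij},\Phi_{jk},\Phi_{ik}\in[0,\pi/2]$, and the angles and area of the triangle.

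With that formula in hand, the remaining task is a trigonometric inequality on a compact parameter set. Since $\partial\theta_i^{jk}/\partial u_j$ is invariant under the common rescaling $(r_i,r_j,r_k)\mapsto(\lambda r_i,\lambda r_j,\lambda r_k)$, one may normalize say $r_i+r_j+r_k=1$ and reduce to a continuous function of $(r_i,r_j,r_k,\Phi_{ij},\Phi_{jk},\Phi_{ik})$ on a compact region. I would then bound this function by $\sqrt{3}$ either by factoring the closed form into pieces controlled by elementary inequalities ($|\cos\theta_j|<1$, $\sin\theta_i\le 1$, and AM--GM applied to the numerator), or, if those estimates fall short, via a Lagrange-multiplier analysis of interior critical points together with an asymptotic inspection of the boundary. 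The main obstacle, and the reason $\sqrt{3}$ is the right per-triangle constant, is precisely this tightness analysis: the supremum is approached only in degenerate limits (a triangle collapsing or a weight hitting $0$ or $\pi/2$), and one must rule out interior maxima to obtain the strict inequalities; summing over the two triangles then gives $B_{ij}<2\sqrt{3}$.
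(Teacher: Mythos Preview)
Your reduction to the per-triangle bound $0<\partial\theta_i^{jk}/\partial u_j<\sqrt{3}$ and the treatment of the lower bound match the paper: positivity is indeed the Thurston/Chow--Luo fact, and summing two triangle contributions gives the claimed interval for $B_{ij}$.

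For the upper bound, however, your route diverges from the paper's and leaves the essential step undone. The paper does \emph{not} attack the explicit algebraic formula by compactness or Lagrange multipliers. Instead it first interprets $\partial\theta_i^{jk}/\partial u_j$ geometrically as $l^*_{ij}|_{\triangle ijk}/l_{ij}$, where $l^*_{ij}|_{\triangle ijk}$ is the (signed) distance from the radical center $O$ of the three packing circles to the edge $v_iv_j$. The inequality $\partial\theta_i^{jk}/\partial u_j<\sqrt{3}$ then becomes the purely synthetic statement $|OH|<\sqrt{3}\,|BC|$ for the foot $H$ of the perpendicular from $O$ to a side $BC$. This is proved by a short contradiction argument (Lemma~A.1 in the appendix) that crucially uses two circle-packing facts unavailable in a generic triangle: the equal-power identity $r_A^2-r_B^2=|OA|^2-|OB|^2$ for the radical center, and $r_A+r_B\ge |AB|$ from $\Phi\in[0,\pi/2]$.

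Your proposal, by contrast, stops at ``compute the closed form and then bound it on a compact set,'' with the actual bounding deferred to unspecified elementary inequalities or a Lagrange-multiplier search. That is the whole difficulty: without the radical-center geometry, there is no evident reason the supremum over the normalized simplex should be $\sqrt{3}$, nor that it is not attained in the interior, nor that the boundary limits stay strictly below it. A compactness argument alone yields \emph{some} finite bound, not $\sqrt{3}$, and an ad hoc AM--GM on the numerator is unlikely to recover the constant. If you want to salvage the analytic approach, you would at minimum need to identify and evaluate the degenerate configurations and show the value there is $\le\sqrt{3}$; the paper's geometric lemma accomplishes exactly this in a structure-aware way and is the missing idea in your outline.
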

\begin{proof}
We just need to prove $0<\frac{\partial \theta^{jk}_i}{\partial r_j} r_j<\sqrt{3}$. We defer the details to the appendix.

\qed
\end{proof}

\begin{proposition} \label{Lii=sum-Bij}
For any $1\leq i,j\leq N$,
\begin{gather}
L_{ij}=
\begin{cases}
\;\sum\limits_{k \sim i}B_{ik} \,, & \text{$j=i$} \\
\;\;-B_{ij} \,,& \text{$j\sim i$}\\
\;\;\;\;\;0 \,,& \text{$else.$}
\end{cases}
\end{gather}
\end{proposition}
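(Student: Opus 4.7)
My plan is to unpack the definition $L_{ij} = \partial K_i/\partial u_j$ directly from $K_i = 2\pi - \sum_{\{i,j,k\}\in F}\theta_i^{jk}$ and split into three cases according to whether $j=i$, $j\sim i$, or $j$ is neither. The single non-trivial ingredient I will need throughout is the symmetry $r_j\,\partial\theta_i^{jk}/\partial r_j = r_i\,\partial\theta_j^{ik}/\partial r_i$ from Lemma 2.3 of \cite{CL1}, which in $u$-coordinates reads $\partial\theta_i^{jk}/\partial u_j = \partial\theta_j^{ik}/\partial u_i$; this is already invoked in the excerpt to justify $B_{ij}=B_{ji}$, so I can take it for granted.

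The two off-diagonal cases are essentially immediate. If $j\not\sim i$ and $j\neq i$, then no face in the star of $i$ contains $j$ as a vertex, so every angle $\theta_i^{pq}$ entering the definition of $K_i$ is independent of $r_j$, giving $L_{ij}=0$. If $j\sim i$, then among the faces incident to $i$ exactly two, say $\{i,j,k\}$ and $\{i,j,l\}$, have $j$ as a vertex, and the sum defining $L_{ij}$ collapses to $-r_j\,\partial(\theta_i^{jk}+\theta_i^{jl})/\partial r_j$, which is $-B_{ij}$ by definition.

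The diagonal case $j=i$ is the step I expect to require the most care. The plan is to start from the flat-triangle identity $\theta_i^{jk}+\theta_j^{ik}+\theta_k^{ij}=\pi$, differentiate with respect to $u_i$, and then use the symmetry above to rewrite
\[
\frac{\partial\theta_i^{jk}}{\partial u_i} \;=\; -\frac{\partial\theta_j^{ik}}{\partial u_i} - \frac{\partial\theta_k^{ij}}{\partial u_i} \;=\; -\frac{\partial\theta_i^{jk}}{\partial u_j} - \frac{\partial\theta_i^{jk}}{\partial u_k}.
\]
Summing over all faces $\{i,j,k\}$ containing $i$ then gives $L_{ii}$ as a sum of mixed partials $\partial\theta_i^{\,\cdot\,\cdot}/\partial u_m$ for neighbors $m$ of $i$. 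The real bookkeeping task, and the one potential source of error, is to regroup these contributions by $m$: each $m\sim i$ sits in exactly two faces $\{i,m,k\}$, $\{i,m,l\}$ of the star of $i$, so its total contribution is $\partial(\theta_i^{mk}+\theta_i^{ml})/\partial u_m = B_{im}$, yielding $L_{ii}=\sum_{m\sim i}B_{im}$. Once that double-counting is handled correctly the proposition follows; I expect no further obstacle, since everything else is definitional.
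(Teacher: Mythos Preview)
Your proposal is correct and is precisely the ``direct calculation'' that the paper alludes to but omits; the case split and the use of the angle-sum identity together with the symmetry $\partial\theta_i^{jk}/\partial u_j=\partial\theta_j^{ik}/\partial u_i$ to handle $L_{ii}$ is the standard way to carry this out.
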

\begin{proof}
This can be proved by direct calculations and we omit the details.
\end{proof}\qed

\begin{proposition}\label{L-semi-positive-rank-ker}
$L$ is semi-positive definite, having rank $N-1$. Moreover, the kernel of $L$ is the span of the vector $(1,\cdots,1)^T$.
\end{proposition}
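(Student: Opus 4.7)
The plan is to recognize $L$, given the structure laid out in Proposition \ref{Lii=sum-Bij}, as a weighted graph Laplacian on the 1-skeleton of $T$ with positive edge weights $B_{ij}$, and then invoke the standard quadratic-form argument.

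First I would note that $L$ is symmetric: the off-diagonal entries satisfy $L_{ij}=-B_{ij}=-B_{ji}=L_{ji}$ by the symmetry $B_{ij}=B_{ji}$ already recorded just before Proposition \ref{Bij}, and the diagonal entries $L_{ii}=\sum_{k\sim i}B_{ik}$ make each row sum equal to zero. In particular, $L\mathbf{1}=0$ where $\mathbf{1}=(1,\dots,1)^T$, so $\mathbf{1}$ lies in the kernel.

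Next I would compute the quadratic form. For any $f=(f_1,\dots,f_N)^T\in\mathds{R}^N$, a direct rearrangement using Proposition \ref{Lii=sum-Bij} gives
\begin{equation*}
f^T L f \;=\; \sum_{i=1}^N f_i\Big(\sum_{k\sim i} B_{ik}\Big) f_i \;-\; \sum_{i\sim j} B_{ij} f_i f_j \;=\; \sum_{\{i,j\}\in E} B_{ij}\,(f_i-f_j)^2.
\end{equation*}
Since $B_{ij}>0$ for every edge by Proposition \ref{Bij}, this expression is non-negative, establishing that $L$ is semi-positive definite.

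Finally, to pin down the kernel, suppose $f^T L f=0$. Then $f_i=f_j$ for every edge $\{i,j\}\in E$. Because $X$ is a closed (hence connected) surface and $T$ is a triangulation, the 1-skeleton of $T$ is connected, so this edge-constancy propagates to give $f_i=f_j$ for all $i,j$, i.e.\ $f\in\mathrm{span}\{\mathbf{1}\}$. Since $L$ is symmetric positive semi-definite, $\ker L$ agrees with the null space of the quadratic form, so $\ker L=\mathrm{span}\{\mathbf{1}\}$ and consequently $\mathrm{rank}(L)=N-1$. There is no genuine obstacle here; the only point requiring a brief justification is the connectedness of the edge graph, which is immediate from $X$ being a closed surface.
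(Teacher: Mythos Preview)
Your argument is correct and entirely self-contained. The paper itself does not prove this proposition independently; it simply invokes Lemma~3.10 of Chow--Luo \cite{CL1}. That lemma is organized differently: it works triangle-by-triangle, showing that for each face $\{i,j,k\}$ the $3\times 3$ Jacobian $-\partial(\theta_i^{jk},\theta_j^{ik},\theta_k^{ij})/\partial(u_i,u_j,u_k)$ is positive semi-definite with one-dimensional kernel spanned by $(1,1,1)^T$, and then assembles $L$ as a sum (with zero-padding) of these local blocks. Your route instead leverages the edge-weighted graph Laplacian description already recorded in Proposition~\ref{Lii=sum-Bij} together with the positivity $B_{ij}>0$ from Proposition~\ref{Bij}, reducing everything to the one-line identity $f^TLf=\sum_{\{i,j\}\in E}B_{ij}(f_i-f_j)^2$. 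Since the paper has done the work of establishing those two propositions, your approach is the more economical one here and needs no outside citation. One minor quibble: ``closed'' does not literally entail ``connected''; state connectedness of $X$ (which the paper tacitly assumes throughout) as a hypothesis rather than a consequence.
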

\begin{proof}
This is a direct consequence from Lemma 3.10 in \cite{CL1}.\qed
\end{proof}

The differential form $\omega\triangleq\sum_{i=1}^N(K_i-k_{av})du_i$ is closed, for $L_{ij}=L_{ji}$. Thus the integral $\int_{u_0}^u\sum_{i=1}^N\big(K_i-k_{av}\big)du_i$ makes good sense (\cite{CL1}), where $u_0$ is an arbitrary point in $\mathds{R}^N$. This integral is crucial for the proof of our main theorem. For convenience, we call this integral combinatorial Ricci potential.

For any smooth closed manifold $(M,g)$ with Riemannian metric $g$, we know that $\int_M\Delta f=0$, where $f$ is an arbitrary smooth function defined on $M$. For combinatorial surface $(X,T,\Phi)$ with circle packing metric $r$, we have $\sum\limits_{i=1}^N\Delta K_i=-(1,\cdots,1)^TLK=0$. Thus we obtain

\begin{proposition} \label{r1...rN=constant}
As long as the combinatorial Calabi flow exists, both $ \prod_{i=1}^Nr_i(t)\equiv \prod_{i=1}^Nr_i(0)$ and
$ \sum_{i=1}^Nu_i(t)\equiv\sum_{i=1}^Nu_i(0)$ are constants.
\end{proposition}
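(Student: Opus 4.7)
The plan is to differentiate each quantity along the flow and observe that the time derivative vanishes identically, using only the structural identity flagged in the paragraph just above the proposition.

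Start with the $u$-variable: the flow in $u$-coordinates is $\dot u = -L^T K$, so
\[
\frac{d}{dt}\sum_{i=1}^N u_i(t) \;=\; \sum_{i=1}^N \Delta K_i \;=\; -(1,\ldots,1)^T L^T K.
\]
The claim reduces to showing $(1,\ldots,1)^T L^T = 0$. By Proposition \ref{L-semi-positive-rank-ker}, the kernel of $L$ is spanned by $(1,\ldots,1)^T$, so $L(1,\ldots,1)^T = 0$. The matrix $L$ is moreover symmetric: this is the statement $L_{ij}=L_{ji}$, which follows from Proposition \ref{Lii=sum-Bij} together with $B_{ij}=B_{ji}$ (the symmetry used earlier to declare $\omega=\sum(K_i-k_{av})du_i$ closed). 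Hence $L^T = L$ and $(1,\ldots,1)^T L^T = (L(1,\ldots,1)^T)^T = 0$. Therefore $\sum_i u_i(t)$ is constant.

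The product statement then follows immediately. Either exponentiate: since $u_i=\ln r_i$, one has $\sum_i u_i(t) = \ln\prod_i r_i(t)$, and the left side being constant forces $\prod_i r_i(t)$ to be constant. Or, equivalently, differentiate the product directly:
\[
\frac{d}{dt}\prod_{i=1}^N r_i(t) \;=\; \Bigl(\prod_{i=1}^N r_i\Bigr)\sum_{i=1}^N \frac{\dot r_i}{r_i} \;=\; \Bigl(\prod_{i=1}^N r_i\Bigr)\sum_{i=1}^N \dot u_i,
\]
which vanishes by the first part.

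There is no real obstacle here: the proposition is essentially a bookkeeping corollary of the algebraic fact $(1,\ldots,1)^T L = 0$, which is the discrete analogue of $\int_M \Delta f = 0$ on a closed Riemannian manifold. All the genuine content — the symmetry $B_{ij}=B_{ji}$ and the kernel description of $L$ — has already been cited from \cite{CL1} in Propositions \ref{Lii=sum-Bij} and \ref{L-semi-positive-rank-ker}.
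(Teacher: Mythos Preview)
Your proof is correct and follows exactly the approach the paper itself uses: the paper's entire argument is the one-line observation $\sum_{i=1}^N\Delta K_i=-(1,\cdots,1)^TLK=0$ placed immediately before the proposition, and you have simply unpacked this identity (via the symmetry of $L$ and Proposition~\ref{L-semi-positive-rank-ker}) and then passed from $\sum u_i$ to $\prod r_i$ by exponentiation. There is nothing to add.
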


\subsection{Long time existence of combinatorial Calabi flow}
Notice that
\begin{equation}
 \Delta K_i=\sum_{j\sim i}B_{ij}(K_j-K_i).
\end{equation}
Using the estimation of $B_{ij}$ in (\ref{Bij}), we obtain
\begin{theorem} \label{long-time-exist}
Given $X,T,\Phi$, where $X$, $T$ and $\Phi$ are defined as before. For any initial circle packing metric $r(0)$, the solution of the combinatorial Calabi flow (\ref{Calabi-flow-r}) exists for all time $t\in[0,+\infty)$.
\end{theorem}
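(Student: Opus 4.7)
The plan is to show that the right-hand side of (\ref{Calabi-flow-u-matrix}) is uniformly bounded on all of $\mathds{R}^N$, and then use a standard maximal-interval argument to rule out finite-time blow-up of the solution.

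First, I would establish a uniform bound on the curvature vector $K(u)$. Because every inner angle of a Euclidean triangle lies in $(0,\pi)$ and $K_i = 2\pi - \sum_{\{i,j,k\}\in F}\theta_i^{jk}$, one gets the crude bound
\begin{equation*}
2\pi - d_i \pi < K_i < 2\pi,
\end{equation*}
where $d_i$ is the number of triangles at vertex $i$. Since $(X,T)$ is fixed, this gives a combinatorial constant $M = M(T)$ with $\|K(u)\|_\infty \leq M$ for every $u \in \mathds{R}^N$. Combining this with the estimate $0 < B_{ij} < 2\sqrt{3}$ from (\ref{Bij}) and the identity $\Delta K_i = \sum_{j\sim i} B_{ij}(K_j - K_i)$, I obtain a constant $C = C(X,T,\Phi)$ with
\begin{equation*}
|\dot u_i(t)| = |\Delta K_i(u(t))| \leq C \quad \text{for all } t \text{ and all } i.
\end{equation*}

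Next, I would run the standard ODE continuation argument. The vector field $u \mapsto -L^{T} K(u)$ is smooth on all of $\mathds{R}^N$ — angles depend smoothly on $r = \mathrm{Exp}(u)$, and the triangle inequalities hold automatically for the lengths $l_{ij} = \sqrt{r_i^2 + r_j^2 + 2 r_i r_j \cos \Phi_{ij}}$ whenever $r \in \mathds{R}^N_{>0}$ — so Picard's theorem yields a unique maximal solution of (\ref{Calabi-flow-u-matrix}) on some interval $[0, T^*)$. If $T^* < +\infty$, integrating the uniform velocity bound gives $|u_i(t) - u_i(0)| \leq C T^*$ throughout $[0, T^*)$, so $u(t)$ remains in a compact subset of $\mathds{R}^N$ as $t \to T^*$. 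Standard ODE continuation then contradicts maximality, forcing $T^* = +\infty$; passing back through $r = \mathrm{Exp}(u)$ gives the same conclusion for (\ref{Calabi-flow-r}).

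There is no serious obstacle once the two pointwise estimates are in hand: the uniform bound $B_{ij} < 2\sqrt{3}$ is the substantive ingredient (and is proved in the appendix), while the $K_i$ bound is the trivial angle-sum inequality. The only conceptual point worth emphasizing is that the flow is genuinely globally defined and smooth on $\mathds{R}^N$ in the $u$-coordinates, so the only possible obstruction to extension is blow-up of $u(t)$ in finite time — and this the uniform velocity bound precludes at once.
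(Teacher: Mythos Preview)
Your proposal is correct and follows essentially the same route as the paper: bound $K_i$ by the trivial angle-sum inequality, bound $B_{ij}$ via (\ref{Bij}), combine these through $\Delta K_i=\sum_{j\sim i}B_{ij}(K_j-K_i)$ to get a uniform velocity bound, and then invoke ODE continuation. The paper phrases the last step in the $r$-variables as $c_0e^{-ct}\leq r_i(t)\leq c_0e^{ct}$, which is exactly your bound $|u_i(t)-u_i(0)|\leq Ct$ rewritten under $u_i=\ln r_i$.
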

\begin{proof}
Let $d_i$ denote the degree (or say valence) at vertex $v_i$, that is the number of edges adjacent to $v_i$. Suppose $d=max(d_1\,,...\,,d_N)$, evidently we see $(2-d)\pi<K_i<2\pi$. From the evaluation of ${B_{ij}}$ in Lemma 1 we know, all $|\Delta K_i|$ is uniformly bounded by a positive constant $c=2\sqrt{3}\cdot N\cdot d\cdot \pi$ which depend only on the triangulation. Then we obtain
$$ c_0e^{-ct}\leq r_i(t)\leq c_0e^{ct}$$
where $c_0=c\left(r(0)\right)$ is a positive constant depends only on the initial state of $r(t)$. This implies that the combinatorial Calabi flow has a solution for all time $t\in[0,\infty)$ for any initial value $r(0)\in \mathds{R}^N_{>0}$. \qed
\end{proof}
\begin{remark}
The long time existence of $r(t)$ can be deduced without using the uniform estimation of $B_{ij}$ in (\ref{Bij}). See Remark \ref{newproof-T-is-infty}.
\end{remark}

\section{Convergence to Constant Curvature Metric}\label{convergence-section}
\subsection{Evolution of combinatorial curvature}
We consider a general flow in the space of all circle packing metrics with form
$$ \frac{dr_i}{dt}=r_i \varphi_i $$
where $\varphi_i$ are smooth functions of $r_1,...,r_N$. For this flow, we have
\begin{lemma}\label{cur-evol}
Under general flow, the discrete Gauss curvature evolves according to
$$ \frac{dK_i}{dt}=-\Delta\varphi_i=(L\varphi)_i $$
or $ \dot{K}=L\varphi $ in matrix form.
\end{lemma}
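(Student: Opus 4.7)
The plan is to apply the chain rule directly, then translate the result into matrix form using the symmetry of $L$ established earlier in the paper.

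First I would compute $\dot{K}_i$ by the chain rule with respect to $r$:
\begin{equation*}
\dot{K}_i = \sum_{j=1}^N \frac{\partial K_i}{\partial r_j}\dot{r}_j = \sum_{j=1}^N \frac{\partial K_i}{\partial r_j}\, r_j\varphi_j = \sum_{j=1}^N \frac{\partial K_i}{\partial u_j}\varphi_j,
\end{equation*}
using $du_j/dr_j = 1/r_j$. The last sum is precisely $(L\varphi)_i$ by the definition of $L$, giving $\dot{K} = L\varphi$ in matrix form.

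Next I would identify this expression with $-\Delta\varphi_i$. By definition $\Delta = -L^T$, so $-\Delta\varphi_i = (L^T\varphi)_i$. The equality $(L\varphi)_i = (L^T\varphi)_i$ then follows from the symmetry $L_{ij} = L_{ji}$, which is built into the paper through Proposition \ref{Lii=sum-Bij} together with the identity $B_{ij} = B_{ji}$ already noted (from Chow--Luo, Lemma 2.3). Thus $\dot{K}_i = -\Delta\varphi_i = (L\varphi)_i$ as claimed.

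There is no substantive obstacle here: the entire content is the chain rule plus symmetry of $L$. The only thing worth being careful about is keeping the row/column conventions consistent, i.e., ensuring the Jacobian entry $\partial K_i/\partial u_j$ really matches $L_{ij}$ as the definition places the $K$-index as the row index and the $u$-index as the column index.
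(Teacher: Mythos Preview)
Your proof is correct and follows essentially the same approach as the paper: the paper's own argument is simply the chain rule in matrix form, $\dot{K} = \frac{\partial(K_1,\dots,K_N)}{\partial(u_1,\dots,u_N)}\,\dot{u} = L\varphi = -\Delta\varphi$, with the details declared ``tedious'' and omitted. Your version just makes the componentwise calculation and the use of the symmetry $L = L^T$ explicit, which the paper leaves implicit.
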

\begin{proof}
We can calculate $\frac{d\theta^{jk}_i}{dt}$ and $\frac{dK_i}{dt}$ in components form to get the proof. The detail is tedious and is omitted. However, above Lemma is a direct conclusion of chain rule in matrix form
$$\dot{K}=\frac{\partial(K_1,\cdots,K_N)}{\partial(u_1,\cdots,u_N)}\dot{u}=L\varphi=-\Delta\varphi.$$ \qed
\end{proof}

As to combinatorial Calabi flow, select $\varphi_i=\Delta K_i$ (or $\varphi=-LK$), and use lemma \ref{cur-evol}, we obtain
\begin{proposition}
Under combinatorial Calabi flow, the discrete Gauss curvature evolves according to
\begin{equation}
\frac{dK}{dt}=-L^2K
\end{equation}
\end{proposition}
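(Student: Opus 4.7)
The plan is to obtain this formula as an immediate corollary of the general evolution identity in Lemma \ref{cur-evol}, which was proved just above. That lemma says that for any flow of the form $\dot{u}_i = \varphi_i$, the curvature vector evolves as $\dot{K} = L\varphi$. So the only task is to identify $\varphi$ for the combinatorial Calabi flow and simplify using the structural properties of $L$ already established earlier in the paper.

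Reading off the Calabi flow in the $u$-variables from equation (\ref{Calabi-flow-u}), we have $\dot{u} = \Delta K$, and by the very definition of the discrete dual-Laplacian, $\Delta = -L^T$. Here I use symmetry of $L$: the entries $L_{ij}$ computed in Proposition \ref{Lii=sum-Bij} are symmetric because $B_{ij}=B_{ji}$, which is exactly the same fact that made the form $\omega = \sum_i(K_i - k_{av})\,du_i$ closed in the discussion of combinatorial Ricci potential. Hence $L^T = L$, and so $\varphi = \Delta K = -LK$.

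Substituting $\varphi = -LK$ into the conclusion $\dot{K} = L\varphi$ of Lemma \ref{cur-evol} gives $\dot{K} = L(-LK) = -L^2 K$, which is precisely the claim. There is no real obstacle; the proposition is essentially a one-line consequence of Lemma \ref{cur-evol} together with the symmetry $L^T = L$. The only thing worth being careful about is keeping the sign convention $\Delta = -L^T$ straight so that the minus signs assemble correctly into the single $-L^2 K$ on the right-hand side.
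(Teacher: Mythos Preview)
Your proof is correct and follows essentially the same approach as the paper: apply Lemma \ref{cur-evol} with $\varphi=\Delta K=-LK$ to obtain $\dot K=L\varphi=-L^2K$. The paper is slightly terser, writing $\varphi=-LK$ without explicitly invoking the symmetry $L^T=L$, whereas you spell out that this comes from $B_{ij}=B_{ji}$ via Proposition \ref{Lii=sum-Bij}.
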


\subsection{Global rigidity and admissible space of curvatures}
For fixed data $(X,T,\Phi)$, let $\Pi:\mathds{R}_{>0}^N\rightarrow \mathds{R}^N$ be the curvature map which maps $r$ to $K$. For any positive constant $c$, denote
\begin{equation}
\mathscr{P}_c\triangleq\Big\{r=(r_1,\cdots,r_N)^T\in \mathds{R}^N_{>0}\,\Big|\,\prod_{i=1}^Nr_i=c\Big\}.
\end{equation}
Thurston proved that the map $\Pi_{\mathscr{P}_c}$ is injective, \emph{i.e.}, the metric is determined by its curvature up to a scalar multiplication (\cite{T1}, \cite{CL1}). This phenomenon is called ``\textbf{global rigidity}". Denote $$\mathscr{K}_{GB}\triangleq\Big\{K=(K_1,\cdots,K_N)^T\in \mathds{R}^N\;\Big|\;\sum_{i=1}^NK_i=2\pi\chi(X)\Big\}.$$
For any nonempty proper subset $I$ of vertices $V$, set
\begin{equation*}
\mathscr{Y}_I\triangleq\Big\{(K=K_1,\cdots,K_N)^T\in \mathds{R}^N \;\Big|\;\sum_{i\in I}^NK_i >-\sum_{(e,v)\in Lk(I)}(\pi-\Phi(e))+2\pi\chi(F_I)\,\Big\}
\end{equation*}
Let
\begin{equation}
\mathscr{Y}\triangleq\mathscr{K}_{GB} \cap \Big(\cap_{\phi\,\neq\,I \,\subsetneqq\,V} \mathscr{Y}_I \Big),
\end{equation}
Bennett Chow and Feng Luo proved that $\mathscr{Y}=\Pi\left(\mathds{R}_{>0}^N\right)$ (\cite{CL1}), so, $\mathscr{Y}$ is exactly the space of all possible combinatorial curvatures. We call $\mathscr{Y}$ ``\textbf{admissible space of curvatures}". Notice that $\mathscr{Y}$ is a nonempty bounded open convex polyhedron in $(N-1)$-dimensional hyperplane $\mathscr{K}_{GB}$. Moreover, $\mathscr{Y}$ is homeomorphic to $\mathds{R}^{N-1}$. By the invariance of domain theorem in differential topology, the curvature map $\Pi$ realized a homeomorphism from $\mathscr{P}_c$ to $\mathscr{Y}$. Denote
\begin{equation}
\mathscr{U}_a=\Big\{u=(u_1,...,u_N)^T\in \mathds{R}^N\,\Big|\,\sum_{i=1}^N u_i=a\Big\},
\end{equation}
then $\mathscr{P}_c=Exp(\mathscr{U}_a)$, where $a=\ln c$. Denote $\widetilde{\Pi}=\Pi\circ Exp:\mathds{R}^N\rightarrow \mathds{R}^N,u\mapsto K$. Then we have homeomorphisms $\mathscr{U}_a\xrightarrow[]{u\mapsto r=Exp(u)}\mathscr{P}_c \xrightarrow[]{r\mapsto K=\Pi(r)}\mathscr{Y}$ and, moreover, for arbitrary positive constant $c$, $\mathscr{Y}=\Pi\left(\mathds{R}_{>0}^N\right)=\Pi\left(\mathscr{P}_c\right)=
\widetilde{\Pi}\left(\mathds{R}^N\right)=\widetilde{\Pi}\left(\mathscr{U}_a\right)$, where $a=\ln c$.
\subsection{Convergence to constant curvature metric}
The constant curvature $K_{av}=k_{av}(1,\,\cdots,1)^T$ may not admissible, that is, $K_{av}$ may not belongs to $\mathscr{Y}$. $K_{av}\in \mathscr{Y}$ is equivalent to the existence of constant curvature metric $r_{av}$, actually, $\Pi^{-1}(K_{av})\subset \mathds{R}_{>0}^N$ is a class of constant curvature metrics which differ by scalar multiplications.

Along the combinatorial Calabi flow (\ref{Calabi-flow-r}), from Proposition \ref{r1...rN=constant} we know that $\{r(t)\}\subset\mathscr{P}_c$, where $c=\prod_{i=1}^N r_i(0)$. Now we are at the stage of proving convergence results. We state our main results as follows:
\begin{theorem}
Fix $(X,T,\Phi)$, where $X$ is a closed surface, $T$ is a triangulation, $\Phi\in[0,\pi/2]$ is a weight. For any initial circle packing metric $r(0)\in \mathds{R}^N_{>0}$, the solution of combinatorial Calabi flow (\ref{Calabi-flow-r}) exists for $t\in [0,\,+\infty)$. Moreover, the following four statements are mutually equivalent
\begin{description}
  \item[(1)] The solution of combinatorial Calabi flow (\ref{Calabi-flow-r}) converges.
  \item[(2)] The solution of combinatorial Ricci flow $\dot{u}=K_{av}-K$ converges.
  \item[(3)] There exists constant curvature circle packing metric $r_{av}$.
  \item[(4)] $2\pi\chi(X)\frac{|I|}{|V|}>-\sum_{(e,v)\in Lk(I)}(\pi-\Phi(e))+2\pi\chi(F_I)\,,\:\:\:\:\: \forall I: \phi \neq I \subsetneqq V.$
\end{description}
Furthermore, if any of above four statements is true, the solution of combinatorial Calabi flow (\ref{Calabi-flow-r}) converges exponentially fast to constant curvature circle packing metric $r_{av}$.
\end{theorem}
\begin{proof}
We have proved the long time existence of flow (\ref{Calabi-flow-r}) in last section. Denote $r(t)$, $t\in[0,\infty)$ as the solution of combinatorial Calabi flow (\ref{Calabi-flow-r}).

First we prove $(1)\Rightarrow(3)$.
If $r(t)$ converges, \emph{i.e.}, $r(+\infty)=\lim\limits_{t\rightarrow+\infty}r(t)\in \mathds{R}_{>0}^N$ exists, then both $K(+\infty)=\lim\limits_{t\rightarrow+\infty}K(t)\in \mathscr{Y}$ and $L(+\infty)=\lim\limits_{t\rightarrow+\infty}L(t)$ exist too. Consider the combinatorial Calabi energy
\begin{equation*}
\mathcal{C}(r(t))= \sum_{i=1}^N\big(K_i(t)-k_{av}\big)^2 = \sum_{i=1}^N K^2_i(t)-N k^2_{av},
\end{equation*}
differentiate over t, we obtain
\begin{equation*}
\mathcal{C'}(t)= 2\sum_{i=1}^N\dot{K}_iK_i = 2K^T\dot{K}=-2K^TL^2K\leq0,
\end{equation*}
which implies that $\mathcal{C}(t)$ is descending as time $t$ increases. Moreover, $\mathcal{C}(t)$ is bounded by zero from below. Therefore $\mathcal{C}(+\infty)=\lim\limits_{t\rightarrow+\infty} \mathcal{C}(t)$ exists.
$\mathcal{C'}(+\infty)$ exists too, since
 $$\mathcal{C'}(+\infty)= \lim\limits_{t\rightarrow+\infty}(-2K^T(t)L^2(t)K(t))=-2K^T(+\infty)L^2(+\infty)K(+\infty).$$
The existence of $\mathcal{C}(+\infty)$ and $\mathcal{C'}(+\infty)$ implies $\mathcal{C'}(+\infty)=0$, that is,
$$K^T(+\infty)L^2(+\infty)K(+\infty)=0.$$
Hence $K(+\infty)\in Ker(L^2)=Ker(L)$. From Lemma \ref{L-semi-positive-rank-ker} we know $K_{av}=K(+\infty)\in \mathscr{Y}$, which implies that constant curvature metric $r_{av}$ exists.

Next we prove $(3)\Rightarrow(1)$. Assuming there exists constant curvature circle packing metric $r_{av}$, which implies $K_{av}\in \mathscr{Y}$, we want to show $r(t)$ converges, \emph{i.e.} $r(+\infty)=\lim\limits_{t\rightarrow+\infty}r(t)\in \mathds{R}_{>0}^N$. We carry out the proof in three steps.

Step 1: Denote $\lambda_1$ as the first eigenvalue of $L$, we claim that
\begin{equation} \label{K^TLK>lamda.C}
K^TL^2K\geq \lambda_1^2\|K-K_{av}\|^2=\lambda_1^2 \mathcal{C}.
\end{equation}
Select an orthogonal matrix $P$, such that $P^TLP=diag\{0,\lambda_1,\cdots,\lambda_{N-1}\}$. Write $P=(e_0,e_1,\cdots,e_{N-1})$, where $e_i$ is the $(i+1)^{th}$ column of $P$. Then $Le_0=0$ and $Le_i=\lambda_ie_i$, $i=1,\cdots,N-1$, which implies $e_0=\frac{1}{\sqrt{N}}(1,\cdots,1)^T$. Write $\widetilde{K}=P^TK$, then $\widetilde{K}_1=e_0^TK=\sqrt{N}k_{av}$. Hence we obtain
$$K^TL^2K=\lambda_1^2\widetilde{K}_2^2+\cdots+\lambda_{N-1}^2\widetilde{K}_N^2\geq\lambda_1^2(\|\widetilde{K}\|^2-\widetilde{K}_1^2)
=\lambda_1^2\|K-K_{av}\|^2=\lambda_1^2 \mathcal{C}.$$
Thus we get the claim above.

Step 2: we show that $\{r(t)\,|\,t\in[0,\infty)\}\subset\subset\mathds{R}^N_{>0}$. Consider the combinatorial Ricci potential
\begin{equation}
f(u)\triangleq\int_{u_{av}}^u\sum_{i=1}^N\big(K_i-k_{av}\big)du_i\,,\,\,u\in \mathds{R}^N,
\end{equation}
where $u_{av}=Exp^{-1}(r_{av})$. This integral is well defined, because $\sum_{i=1}^N(K_i-k_{av})du_i$ is a closed differential form. $f\big|_{\mathscr{U}_a}$ is strictly convex (see the proof of Theorem \ref{Ricci-potential-conveg-infty}), hence $\nabla f\big|_{\mathscr{U}_a}: \mathscr{U}_a\rightarrow \mathds{R}^N, u\mapsto K-K_{av}$ is injective, therefore $u_{av}$ is the unique critical point of $f\big|_{\mathscr{U}_a}$ and $f$ is bounded from blow by zero ($f(u)\geq f(u_{av})=0$). Consider $\varphi(t)\triangleq f\big(u(t)\big)$, then
$$\varphi'(t)=(\nabla f)^T \dot{u}(t)=(K-K_{av})^T (-LK)=-K^TLK\leq0$$
hence $\varphi(t)$ is descending as $t$ increases and then $0\leq\varphi(t)\leq \varphi(0)=f\big(u(0)\big)$, for any $t\in[0,+\infty)$. Hence $\big\{u(t)\,| \,t\in [0,+\infty)\big\} \subset \left(f\big|_{\mathscr{U}}\right)^{-1}\big([0,\varphi(0)]\big)$.
Because $f\big|_{\mathscr{U}}$ is proper by Theorem \ref{Ricci-potential-conveg-infty}, $\left(f\big|_{\mathscr{U}}\right)^{-1}\big([0,\varphi(0)]\big)$ is a compact subset  of $\mathscr{U}$. Therefore $\big\{u(t)\,|\; t\in [0,\,+\infty)\big\}\subset\subset\mathscr{U}$, or equivalently, $$\{r(t)\,|\,t\in[0,\infty)\}\subset\subset\mathscr{P}\subset\mathds{R}^N_{>0}.$$

Step 3: we show that $r(t)$ converges exponentially fast to $r_{av}$. Due to step 2, $\lambda_1^2(t)$, the first eigenvalue of $L^2(t)$, has a uniform lower bound along the Calabi flow, \emph{i.e.}, $\lambda_1^2(t)\geq \lambda/2>0$, where $\lambda$ is a positive constant. Using (\ref{K^TLK>lamda.C}), we obtain $$\mathcal{C'}(t)=-2K^TL^2K\leq -2\lambda_1^2(t) \mathcal{C}\leq-\lambda \mathcal{C}.$$
Hence then we have
\begin{equation}
\mathcal{C}(t)\leq \mathcal{C}(0)e^{-\lambda t},
\end{equation}
which implies that the curvature $K_i(t)$ converges exponentially fast to $k_{av}$. Moreover, we can deduce that $r(t)$ and $u(t)$ converge exponentially fast to $r_{av}$ and $u_{av}$ respectively by similar methods as in the proof of Lemma 4.1 in \cite{Ge2}.

The equivalence between (2), (3) and (4) can be found in \cite{CL1} (Thurston first claimed that (3) and (4) are equivalent in \cite{T1}). Thus we finished the proof.\qed
\end{proof}

\begin{remark} \label{newproof-T-is-infty}
We can prove $\{r(t)\,|\,t\in[0,T)\}\subset\subset\mathds{R}^N_{>0}$ similarly without assuming $T=+\infty$. Using the classical extension theorem of solutions in ODE theory, we obtain $T=+\infty$. This gives a new proof of Theorem \ref{long-time-exist}.
\end{remark}

\begin{theorem}
For $(X,T,\Phi)$, consider the following user prescribed combinatorial Calabi flow
\begin{equation}\label{user-prescribed-flow}
\dot{u}=L(\overline{K}-K),
\end{equation}
where $\overline{K}\in\mathds{R}^N$ is any prescribed curvature. For any initial circle packing metric $r(0)\in \mathds{R}^N_{>0}$, the solution of (\ref{user-prescribed-flow}) exists for $t\in [0,\,+\infty)$. Moreover, the following three statements are mutually equivalent
\begin{description}
  \item[(1)] The solution of (\ref{user-prescribed-flow}) converges.
  \item[(2)] The prescribed curvature $\overline{K}$ is admissible, \emph{i.e.}, $\overline{K}\in\mathscr{Y}$.
  \item[(3)] The solution of user prescribed Ricci flow $\dot{u}=\overline{K}-K$ converges.
\end{description}
Furthermore, if any of above three statements is true, the solution of flow (\ref{user-prescribed-flow}) converges exponentially fast to $\bar{r}$.
\end{theorem}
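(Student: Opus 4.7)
The plan is to mirror the proof of Theorem~4.1 with the constant curvature $K_{av}$ replaced by $\bar K$ and $r_{av}$ replaced by $\bar r = \Pi^{-1}(\bar K)\cap \mathscr P_c$, where $c = \prod_i r_i(0)$; such $\bar r$ exists uniquely exactly when $\bar K\in \mathscr Y$. The two key functionals will be the prescribed Calabi-type energy $\tilde{\mathcal C}(r) \triangleq \|K-\bar K\|^2$ and the prescribed Ricci potential
\[
\tilde f(u) \triangleq \int_{\bar u}^{u}\sum_{i=1}^N (K_i-\bar K_i)\,du_i,
\]
which is a well-defined closed integral because $L$ is symmetric.

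For long time existence I would repeat the argument of Theorem~\ref{long-time-exist}: $K_i\in((2-d)\pi,2\pi)$, $\bar K$ is fixed, and $0<B_{ij}<2\sqrt 3$ bounds the entries of $L$. Hence $|\dot u_i| = |(L(\bar K-K))_i|$ is uniformly bounded by a constant depending only on $(X,T,\Phi,\bar K)$, so $u(t)$ (equivalently $r(t)\in \mathds R^N_{>0}$) cannot escape in finite time. Since $\mathbf 1\in\ker L$, the quantity $\sum_i u_i$ is conserved along the flow and $u(t)$ stays in the affine slice $\mathscr U_a$ determined by the initial data.

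For $(2)\Rightarrow(1)$, assume $\bar K\in\mathscr Y$. The Hessian of $\tilde f$ is $L$, which is positive definite on $\mathbf 1^\perp$, so $\tilde f|_{\mathscr U_a}$ is strictly convex with unique minimum at $\bar u$ and, by the argument of Theorem~\ref{Ricci-potential-conveg-infty}, proper on $\mathscr U_a$. Along the flow,
\[
\frac{d}{dt}\tilde f(u(t)) = (K-\bar K)^T L(\bar K-K) = -(K-\bar K)^T L (K-\bar K)\leq 0,
\]
so $\{u(t)\}$ is trapped in a sublevel set, hence in a compact subset, of $\mathscr U_a$. On this compact set the smallest positive eigenvalue of $L$ is bounded below by some $\lambda>0$. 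Since $\bar K\in\mathscr Y\subset\mathscr K_{GB}$ implies $K-\bar K\perp\mathbf 1$, the spectral computation from Theorem~4.1 yields
\[
\tilde{\mathcal C}'(t) = -2(K-\bar K)^T L^2 (K-\bar K)\leq -2\lambda^2\,\tilde{\mathcal C}(t),
\]
so $\tilde{\mathcal C}(t)\leq\tilde{\mathcal C}(0) e^{-2\lambda^2 t}$ and the exponential convergences $K(t)\to\bar K$ and $r(t)\to\bar r$ follow.

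For $(1)\Rightarrow(2)$: if $r(t)\to r^*\in\mathds R^N_{>0}$, then $K^*=K(r^*)\in\mathscr Y$ and $\tilde{\mathcal C}'(+\infty)=0$, forcing $L^*(K^*-\bar K)=0$, i.e.\ $K^*-\bar K\in\mathrm{span}(\mathbf 1)$; the Gauss--Bonnet constraint then pins down $\bar K = K^*\in\mathscr Y$. The equivalence $(2)\Leftrightarrow(3)$ is Chow--Luo's analysis of the prescribed Ricci flow in \cite{CL1} via the same potential $\tilde f$. The main obstacle I anticipate is the properness of $\tilde f|_{\mathscr U_a}$: this is the analogue for prescribed $\bar K$ of Theorem~\ref{Ricci-potential-conveg-infty} and should follow by the same convex-analytic argument, using that $\bar K$ lies in the open polyhedron $\mathscr Y$ to ensure $\tilde f\to\infty$ along any ray escaping to infinity in $\mathscr U_a$; otherwise the limiting direction would contradict one of the Thurston inequalities cutting out $\mathscr Y$.
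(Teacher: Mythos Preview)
The paper states this theorem without proof (it appears immediately before the Acknowledgement with no argument given), evidently regarding it as a routine adaptation of the main Theorem proved just above. Your proposal carries out exactly that adaptation---replacing $K_{av}$ by $\bar K$, $r_{av}$ by $\bar r$, the Calabi energy by $\tilde{\mathcal C}=\|K-\bar K\|^2$, and the Ricci potential by $\tilde f$---and all the steps (long-time existence via the $B_{ij}$ bound, properness of $\tilde f|_{\mathscr U_a}$ via Lemma~\ref{proper}, the spectral inequality on $\mathbf 1^\perp$, and the limit argument for $(1)\Rightarrow(2)$) go through as you indicate.

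One genuine wrinkle worth flagging: in your $(1)\Rightarrow(2)$ step you use ``the Gauss--Bonnet constraint'' to pass from $K^*-\bar K\in\mathrm{span}(\mathbf 1)$ to $\bar K=K^*$. That requires $\sum_i\bar K_i=2\pi\chi(X)$, which the theorem as stated does not assume ($\bar K\in\mathds R^N$ is arbitrary). Since $L\mathbf 1=0$, the flow \eqref{user-prescribed-flow} is literally unchanged under $\bar K\mapsto\bar K+c\mathbf 1$, so without the hypothesis $\bar K\in\mathscr K_{GB}$ the implication $(1)\Rightarrow(2)$ can fail (the flow may converge while $\bar K$ itself is not admissible, only its shift is). This is an imprecision in the paper's statement rather than a flaw in your strategy; the clean fix is to assume $\bar K\in\mathscr K_{GB}$ from the outset, after which your argument is complete.
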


\ac
First I want to show my greatest respect to my advisor Gang Tian who brought me to the area of combinatorial curvature flows several years ago, he have been so kind, loving and caring to me in last few years. I like to give special thanks to Professor Feng Luo for reading the paper carefully and giving numerous improvements to the paper. I am also grateful to Dongfang Li who had done some numerical simulations for combinatorial Calabi flow. At last, I would like to thank Guanxiang Wang, Haozhao Li, Yiyan Xu, Wenshuai Jiang for helpful discussions.

\appendix
\section{Dual structure and the proof of (\ref{Bij})}
\subsection{Dual structure determined by circle patterns}\label{Appendix-DDL}
Suppose $C_i,C_j,C_k$ are closed disks centered at $v_i$, $v_j$ and $v_k$ so that their radii are $r_i$, $r_j$ and $r_k$. They both intersect with each other at an angle supplementary to $\Phi(v_iv_j)$, $\Phi(v_jv_k)$, $\Phi(v_kv_i)$. Let $\mathcal{L}_i$, $\mathcal{L}_j$, $\mathcal{L}_k$ be the geodesic lines passing through the pairs of the intersection points of $\{C_k,C_j\}$, $\{C_k,C_i\}$, $\{C_i,C_j\}$. These three lines $\mathcal{L}_i$, $\mathcal{L}_j$, $\mathcal{L}_k$ must intersect in a common point $O$ (see Figure \ref{lij=OH}). Let $i'$ be an intersection point of the circles $C_j$ and $C_k$. Denote
\begin{equation}
l^*_{ij}|_{\triangle ijk}\triangleq r_j \frac{\cos\theta^{i'k}_j-\cos\theta^{ik}_j\cos\theta^{k'i}_j}{\sin\theta^{ik}_j},
\end{equation}
where $\theta^{i'k}_j$ is the inner angle at $v_j$ in the triangle $\{j,k,i'\}$. $l^*_{ij}|_{\triangle ijk}$ is the directed distance from $O$ to $H$, the foot of perpendicular at edge $v_iv_j$. The directed distance from $O$ to edge $v_iv_j$ is positive (negative), when $O$ is inside (outside) $\angle v_iv_jv_k$.

\begin{figure}
\begin{minipage}[t]{0.45\linewidth}
\centering
\includegraphics[width=0.8\textwidth]{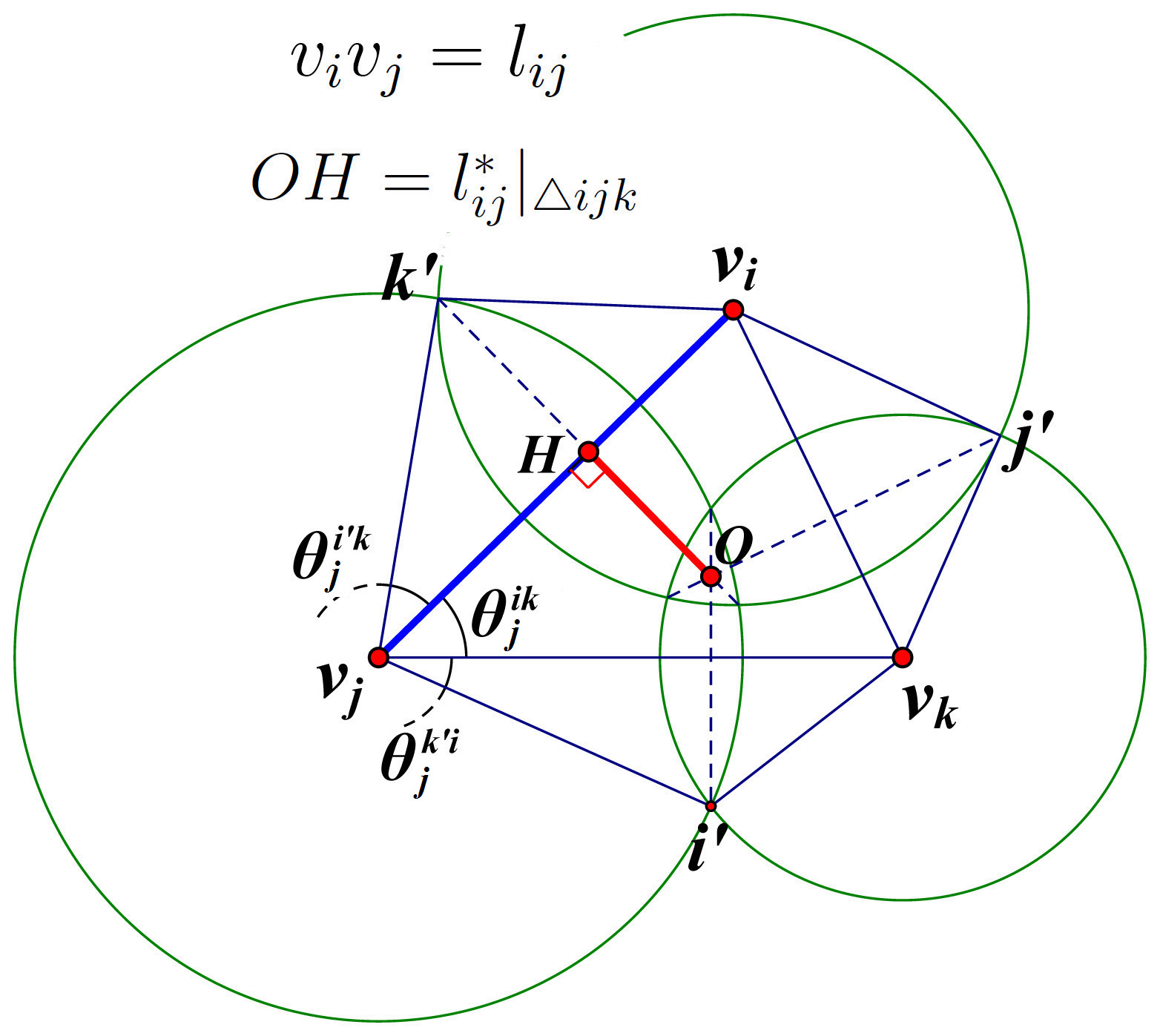}
\caption{discrete dual-Laplacian}
\label{lij=OH}
\end{minipage}
\begin{minipage}[t]{0.45\linewidth}
\centering
\includegraphics[width=0.8\textwidth]{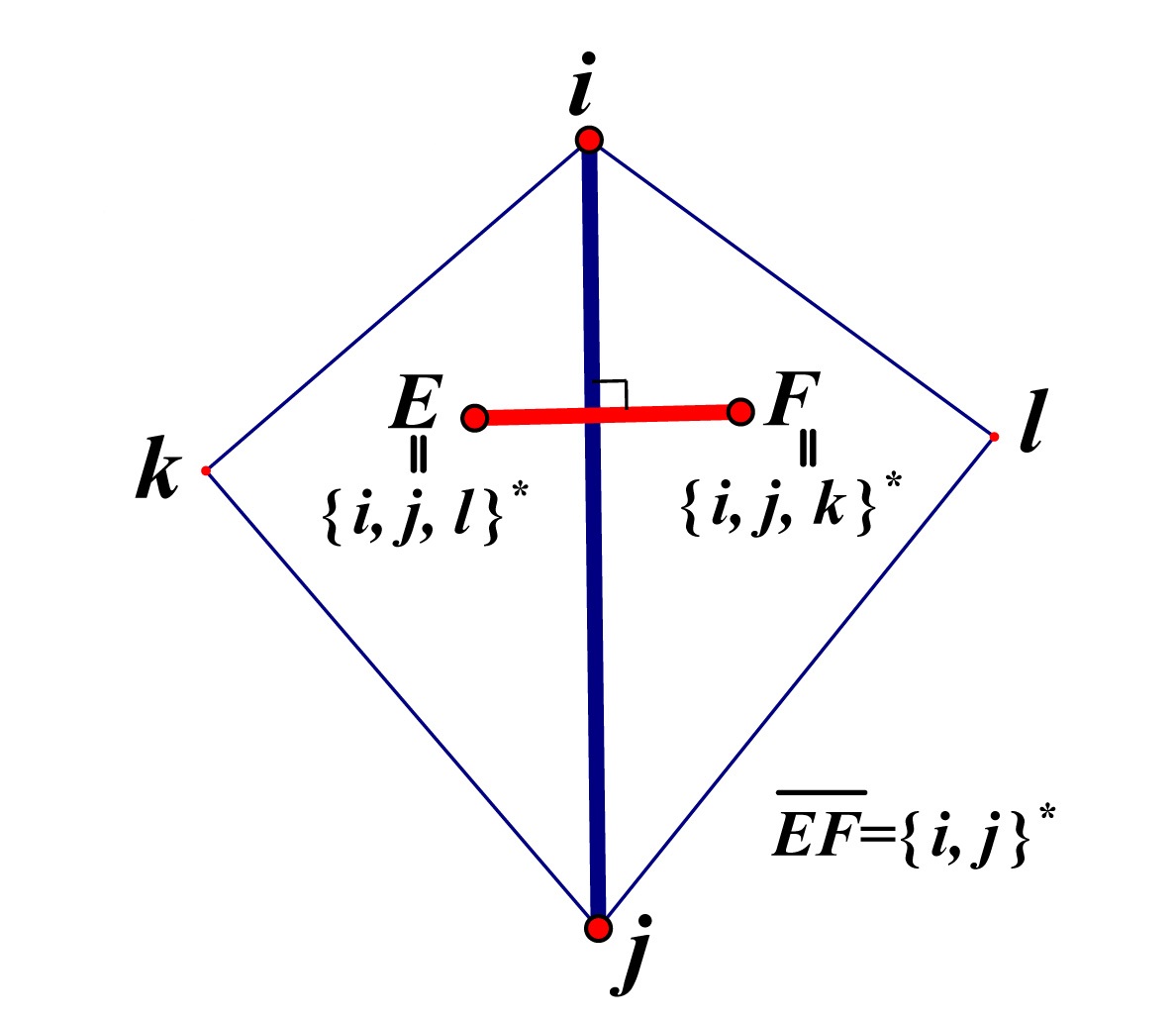}
\caption{dual structure}
\label{duan-config}
\end{minipage}
\end{figure}

Think $O$ as $\{i,j,k\}^*$, the dual vertex to the triangle $\{i,j,k\}$. For two adjacent triangles $\{i,j,k\}$ and $\{i,j,l\}$ having a common edge $\{i,j\}$, $\{i,j\}^*$ (the dual edge to the edge $\{i,j\}$) is the line from $\{i,j,k\}^*$ to $\{i,j,l\}^*$ (see Figure \ref{duan-config}). Note that $\{i,j\}^*$ is perpendicular to $\{i,j\}$. Denote $l_{ij}^*$ as the length of $\{i,j\}^*$, then $l^*_{ij} \doteqdot l^*_{ij}|_{\triangle ijk}+l^*_{ij}|_{\triangle ijl}$. Using cosine law in triangle $\{i,j,k\}$, we have
\begin{equation}\label{half-Bij=lij*}
\frac{\partial \theta^{jk}_i}{\partial r_j} r_j = \frac{r_j}{l_{ij}} \frac{\cos\theta^{i'k}_j-\cos\theta^{ik}_j\cdot \cos\theta^{k'i}_j}{\sin\theta^{ik}_j}= \frac{l^*_{ij}|_{\triangle ijk}}{l_{ij}}.
\end{equation}
Thus we obtain
\begin{equation}
\begin{split}
B_{ij} & =\frac{\partial \theta^{jk}_i}{\partial r_j} r_j+\frac{\partial \theta^{jl}_i}{\partial r_j} r_j= \frac{l^*_{ij}|_{\triangle ijk}}{l_{ij}}+\frac{l^*_{ij}|_{\triangle ijl}}{l_{ij}}= \frac{l^*_{ij}}{l_{ij}}.
\end{split}
\end{equation}
Furthermore,
\begin{equation}\label{Dis-Dual-Lap}
\begin{split}
\Delta f_i & = -\sum_{j=1}^N\frac{\partial K_j}{\partial u_i}f_j = -\sum_{j=1}^NL_{ij}f_j= \:\:\:\:\sum_{j\sim i}B_{ij}(f_j-f_i)=\sum_{j\sim i}\frac{l^*_{ij}}{l_{ij}}(f_j-f_i),
\end{split}
\end{equation}
where $f:V \rightarrow \mathds{R}$ is a function defined on vertices.

The classical discrete Laplace operator $``\Delta"$ is often written as the following form (\cite{CHU})
\begin{equation*}
\Delta f_i=\sum_{j\sim i}\omega_{ij}(f_j-f_i),
\end{equation*}
where the weight $\omega_{ij}$ can be arbitrarily selected for different purpose. Thurston first studied the dual structure of circle patterns in \cite{T1}. Bennett Chow and Feng Luo calculated formula (\ref{half-Bij=lij*}) in \cite{CL1}. Glickenstein wrote $\omega_{ij}=\frac{l^*_{ij}}{l_{ij}}$ in \cite{G1}. The weight $\omega_{ij}$ comes from dual structure of circle patterns. This is why we call (\ref{Dis-Dual-Lap}) discrete dual-Laplacian. There are other types of discrete Laplacians, such as cotangent-Laplacian. See more in \cite{G3}, \cite{G4}, \cite{G5}, \cite{He}, and \cite{Hi}.

\subsection{proof of (\ref{Bij})}
\begin{lemma}\label{zhangruixiang}
Consider a triangle $\triangle ABC$ coming from a configuration of circle patterns with weight $\Phi\in [0,\frac{\pi}{2}]$. Assuming $O$ is inside $\triangle ABC$. $OH$ is the altitude from $O$ onto side $BC$. Assuming the point $H$ lies between $B$ and $C$ (see Figure \ref{figure-zhangruixiang}). Then $|OH|<\sqrt{3}|BC|$.
\end{lemma}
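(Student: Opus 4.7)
The plan is to reduce the geometric inequality to an explicit formula and then bound the resulting expression using the constraint $\Phi\in[0,\pi/2]$ together with the fact that $O$ lies inside $\triangle ABC$. I would set up coordinates with $B=(0,0)$ and $C=(|BC|,0)$, so that $H=(h,0)$ with $0<h<|BC|$ and $O=(h,d)$ with $d=|OH|>0$. Because each of the three lines $\mathcal{L}_A,\mathcal{L}_B,\mathcal{L}_C$ is the radical axis of the corresponding pair of circles, they are perpendicular to the sides of $\triangle ABC$; in particular $\mathcal{L}_A$ is the perpendicular to $BC$ through $H$, and $O$ is characterized by the common-power condition $|OB|^2-r_B^2=|OC|^2-r_C^2=|OA|^2-r_A^2=p$.

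The first reduction would be to extract an explicit expression for $d^2$. From $|OB|^2-|OC|^2=r_B^2-r_C^2$ one recovers $h=r_B(r_B+r_C\cos\Phi_{BC})/|BC|$, and then
\[
d^2=|OB|^2-h^2=(r_B^2-h^2)+p=\frac{r_B^2 r_C^2\sin^2\Phi_{BC}}{|BC|^2}+p.
\]
The first term admits a clean bound: using $|BC|^2=r_B^2+r_C^2+2r_Br_C\cos\Phi_{BC}\geq 2r_Br_C$ (valid since $\cos\Phi_{BC}\geq 0$), we get $r_B^2r_C^2/|BC|^2\leq |BC|^2/4$, so this term is at most $|BC|^2/4$. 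Hence when $p\leq 0$ we already have $d\leq |BC|/2<\sqrt{3}\,|BC|$. This handles the case in which $O$ lies inside at least one of the three circles.

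The main obstacle is the remaining case $p>0$, in which $O$ is outside all three circles. Here I would use the assumption that $O$ is inside $\triangle ABC$: the perpendicular line to $BC$ through $H$ enters the triangle at $H$ and exits at a point $Q$ lying on either $AB$ or $AC$, so $|OH|<|HQ|$. This reduces the problem to controlling the perpendicular chord length at $H$ in terms of $|BC|$ via the circle-pattern side-length constraints $|AB|\leq r_A+r_B$, $|AC|\leq r_A+r_C$ (valid because $\cos\Phi\geq 0$) together with the identity for $h$. My plan is then to use the right triangles $OBH$ and $OCH$, in which $\angle OBH<\angle ABC$ and $\angle OCH<\angle ACB$, to express $|OH|$ via $|OH|=h\tan\angle OBH=(|BC|-h)\tan\angle OCH$, and then couple these with the explicit values of $h$ and $|BC|-h$ together with the circle-pattern bounds. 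The delicate point, and the step I expect to be hardest, is translating ``$O$ inside the triangle'' into a quantitative bound on $p$ tight enough to push the sum $d^2$ below $3|BC|^2$; the examples I have worked through suggest the true supremum of $d/|BC|$ is well below $\sqrt 3$, so the choice of constant in the lemma is deliberately slack to simplify this estimation.
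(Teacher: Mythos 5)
There is a genuine gap. Your reduction is set up correctly: $O$ is the radical center, $\mathcal{L}_A$ is the radical axis of the circles at $B$ and $C$ and hence the perpendicular to $BC$ through $H$, and your formulas $h=r_B(r_B+r_C\cos\Phi_{BC})/|BC|$ and $d^2=r_B^2r_C^2\sin^2\Phi_{BC}/|BC|^2+p$ check out, as does the bound $d\leq|BC|/2$ when $p\leq 0$. But the case you dispose of is not the main one: for a tangent circle packing ($\Phi\equiv 0$, e.g.\ three unit circles with centers forming an equilateral triangle of side $2$) the radical center has power $p=1/3>0$, so the generic situation is precisely the case $p>0$ that you leave as a ``plan.'' And the plan itself stalls exactly where the difficulty lies: you say the hard step is converting ``$O$ inside $\triangle ABC$'' into a quantitative upper bound on $p$, and no such bound is derived. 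The chord estimate $|OH|<|HQ|$ and the identities $|OH|=h\tan\angle OBH=(|BC|-h)\tan\angle OCH$ are true but by themselves give nothing (as $A$ recedes, $|HQ|$ can be made arbitrarily large relative to $|BC|$; it is only the constraints $|AB|\leq r_A+r_B$, $|AC|\leq r_A+r_C$ that save the day, and you never actually bring them to bear). So the proposal proves the lemma only on a non-generic subcase.

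For comparison, the paper's proof is a short contradiction argument that never isolates $p$: assume $|OH|\geq\sqrt{3}\,|BC|$; since $H$ lies between $B$ and $C$ this forces $\angle BOH,\angle COH\leq\pi/6$, hence $\angle BOC\leq\pi/3$ and, because $O$ is interior so the three angles at $O$ sum to $2\pi$ with each less than $\pi$, both $\angle AOB$ and $\angle AOC$ are at least $2\pi/3$. A triangle computation in $\triangle AOB$ then gives $|AB|-|AO|\geq\tfrac12|OB|$, which combined with $r_A+r_B\geq|AB|$ and the radical-center identity $r_A^2-r_B^2=|OA|^2-|OB|^2$ yields $r_B\geq\tfrac12|OB|$ (and similarly $r_C\geq\tfrac12|OC|$); finally $r_B^2+r_C^2\geq\tfrac14(|OB|^2+|OC|^2)\geq\tfrac12|OH|^2\geq\tfrac32|BC|^2$ contradicts $|BC|^2\geq r_B^2+r_C^2$. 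If you want to salvage your coordinate approach, the missing ingredient is an analogous use of the vertex $A$ (via $|AB|\leq r_A+r_B$, $|AC|\leq r_A+r_C$ and the equal-power condition at $A$) to bound $p$; without it the argument cannot close.
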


\begin{figure}
\centering
\includegraphics[width=0.36\textwidth]{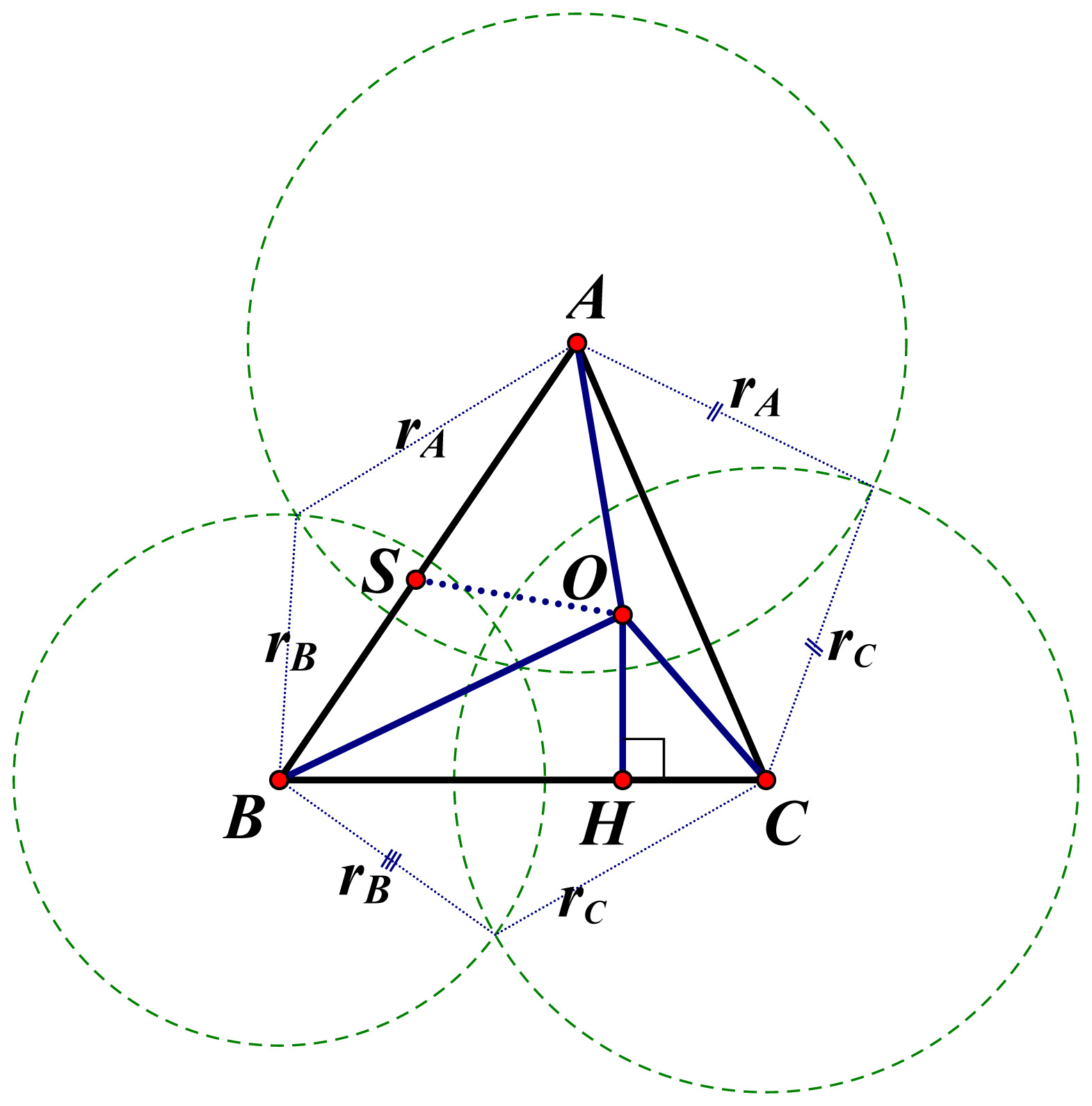}
\caption{O is inside $\triangle ABC$, such that $H$ lies between $B$ and $C$.}
\label{figure-zhangruixiang}
\end{figure}

\begin{proof}
Suppose this lemma is false. Then $|OH|\geq \sqrt{3}|BC|$, and
\begin{displaymath}
|OH|\geq \sqrt{3}|BC| \Rightarrow
\begin{cases}
|OH|\geq \sqrt{3}|BH|\Rightarrow \measuredangle BOH\leq \pi/6,  \\[7pt]
|OH|\geq \sqrt{3}|CH|\Rightarrow \measuredangle COH\leq \pi/6.
\end{cases}
\end{displaymath}
Hence
$$\measuredangle BOC=\measuredangle BOH + \measuredangle COH \leq \pi/3,$$
and
$$\measuredangle AOC=2\pi-(\measuredangle AOB+\measuredangle BOC)\geq 2\pi-\left(\pi+\frac{\pi}{3}\right)=\frac{2\pi}{3},$$
$$\measuredangle AOB=2\pi-(\measuredangle AOC+\measuredangle BOC)\geq 2\pi-\left(\pi+\frac{\pi}{3}\right)=\frac{2\pi}{3}.$$
In $\triangle AOB$, we have $|AB|>|AO|$, for $AB$ faces to a much bigger angle. Thus we can select a unique point $S$ between $A$ and $B$ so that $|AS|=|AO|$. using
$$\frac{\measuredangle AOB}{2}<\frac{\pi-\measuredangle BAO}{2}<\frac{\pi}{2},$$
we get
\begin{displaymath}
\measuredangle SOB = \measuredangle AOB -\frac{\pi-\measuredangle BAO}{2}\in \left[\measuredangle AOB -\frac{\pi}{2},\,\measuredangle AOB -\frac{\measuredangle AOB}{2}\right]\subset \left[\frac{\pi}{6},\frac{\pi}{2}\right],
\end{displaymath} and
\begin{displaymath}
\measuredangle OSB = \pi-\frac{\pi-\measuredangle BAO}{2}\in \left[\pi -\frac{\pi}{2},\,\pi-\frac{\measuredangle AOB}{2}\right]\subset \left[\frac{\pi}{2},\frac{2\pi}{3}\right].
\end{displaymath}
Then we obtain
$$\frac{|OB|}{|AB|-|AO|}=\frac{|OB|}{|BS|}=\frac{\sin \measuredangle BSO}{\sin \measuredangle SOB}\leq\frac{1}{(\frac{1}{2})}=2,$$
which implies $$|AB|-|AO|\geq\frac{1}{2}|OB|.$$
Similarly, we have
$$|AC|-|AO|\geq\frac{1}{2}|OC|.$$
Next we show
$$r_B\geq\frac{1}{2}|OB|$$
and $$r_C\geq\frac{1}{2}|OC|.$$
If $r_B\leq|OB|$, we know $r_A\leq|OA|$ by use of $r^2_A-r^2_B=|OA|^2-|OB|^2$. We already know $r_A+r_B\geq|AB|$. Hence
$$r_B\geq|AB|-r_A\geq|AB|-|OA|\geq\frac{1}{2}|OB|.$$
Thus we always have $r_B\geq\frac{1}{2}|OB|$, no matter $r_B\geq |OB|$ or $r_B \leq|OB|$.
Similarly, we have $r_C\geq\frac{1}{2}|OC|$.
Then it is easy to see
\begin{displaymath}
\begin{split}
r^2_B+r^2_C & \geq \frac{1}{4}(|OB|^2+|OC|^2)\geq \frac{1}{4}\cdot2|OH|^2 = \frac{1}{2}|OH|^2 \geq \frac{3}{2}|BC|^2> |BC|^2,
\end{split}
\end{displaymath}
which contradicts with the fact
$$|BC|=\sqrt{r^2_B+r^2_C+2r_Br_C\cos\Phi_{BC}}\geq \sqrt{r^2_B+r^2_C}.$$
Thus the lemma is true.\qed
\end{proof}

(Above Lemma belongs to Ruixiang Zhang and Chenjie Fan.)

\begin{corollary} Given $(X,T,\Phi)$, where $X$ is a closed surface, $T$ is a triangulation, $\Phi\in[0,\pi/2]$ is a weight. Then
\begin{equation*}
 0<\frac{\partial \theta^{jk}_i}{\partial r_j}r_j<\sqrt{3}.
\end{equation*}
\end{corollary}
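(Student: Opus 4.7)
The approach is to reduce the analytic inequality to a geometric one using the dual structure developed in Appendix \ref{Appendix-DDL}. By the identity (\ref{half-Bij=lij*}),
$$\frac{\partial \theta^{jk}_i}{\partial r_j}\, r_j = \frac{l^*_{ij}|_{\triangle ijk}}{l_{ij}},$$
so it suffices to show $0 < l^*_{ij}|_{\triangle ijk} < \sqrt{3}\, l_{ij}$, where $l^*_{ij}|_{\triangle ijk}$ is the signed distance from the dual vertex $O$ (the common intersection of the radical lines $\mathcal{L}_i,\mathcal{L}_j,\mathcal{L}_k$) to the edge $v_iv_j$ inside $\triangle v_iv_jv_k$.

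For positivity, I would invoke the standard observation that when all weights lie in $[0,\pi/2]$ the configuration is ``acute'': the point $O$ lies on the same side of $v_iv_j$ as $v_k$, and the foot $H$ of the perpendicular from $O$ to $v_iv_j$ lies strictly between $v_i$ and $v_j$. With the sign convention of Section \ref{Appendix-DDL} this forces $l^*_{ij}|_{\triangle ijk} > 0$. The same acute property guarantees that $O$ lies in the interior of $\triangle v_iv_jv_k$, so the hypotheses of Lemma \ref{zhangruixiang} are in force with the identification $A=v_k$, $B=v_i$, $C=v_j$. Applying that lemma directly yields $|OH| < \sqrt{3}\,|v_iv_j|$, i.e., $l^*_{ij}|_{\triangle ijk} < \sqrt{3}\,l_{ij}$, and the corollary follows.

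The principal obstacle is not the analytic manipulation but the preliminary geometric verification that $O$ is interior to $\triangle v_iv_jv_k$ and $H$ lies on the open segment $v_iv_j$ whenever $\Phi \in [0,\pi/2]$. This amounts to a direct inspection of the radical-axis configuration determined by three mutually overlapping disks whose pairwise intersection angles are supplementary to weights in $[0,\pi/2]$, and is essentially the content of Thurston's Lemma 13.7.2 in \cite{T1} together with the discussion in Section 2 of \cite{CL1}. Once this geometric setup is in hand, Lemma \ref{zhangruixiang} does all of the remaining work, and the factor $\sqrt{3}$ in the upper bound is precisely the universal constant produced by the contradiction argument in its proof.
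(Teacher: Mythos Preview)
Your proposal is correct and follows essentially the same route as the paper: reduce via identity (\ref{half-Bij=lij*}) to the geometric statement $0<l^*_{ij}|_{\triangle ijk}<\sqrt{3}\,l_{ij}$, cite Thurston for the positivity (equivalently, that $O$ lies inside $\triangle v_iv_jv_k$), and then apply Lemma \ref{zhangruixiang} for the upper bound. The only cosmetic difference is that the paper quotes Thurston's positivity of $\frac{\partial \theta^{jk}_i}{\partial r_j}r_j$ directly and deduces that $O$ is interior, whereas you phrase the interiority of $O$ as the primary geometric input and derive positivity from it; these are equivalent.
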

\begin{proof}
Thurston claimed that $\frac{\partial \theta^{jk}_i}{\partial r_j}r_j=l^*_{ij}|_{\triangle ijk}$ is positive (\cite{T1}). Hence $O$ is inside the $\triangle v_iv_jv_k$. The proof can be finished by Lemma \ref{zhangruixiang}.
\end{proof} \qed

\section{Combinatorial Ricci potential is proper}
\begin{lemma} \label{proper}
Assuming function $\psi\in C^2(\mathds{R}^n)$ satisfies
\begin{description}
  \item[(1)] $\psi$ is strictly convex to the downwards, and
  \item[(2)] there exists at least one point $p$ such that $\nabla \psi(p)=0$.
\end{description}
Then $\lim\limits_{x\rightarrow\infty}\psi(x)=+\infty$. Moreover, $\psi$ is proper.
\end{lemma}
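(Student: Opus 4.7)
The plan is to reduce the growth statement to two ingredients: convexity applied along rays from $p$, and a uniform lower bound on $\psi$ along the unit sphere centered at $p$ via compactness. After translating so that $p=0$ and $\psi(0)=0$, I would first note that $0$ is the unique global minimum of $\psi$, with $\psi(x)>0$ for every $x\ne 0$. This follows from the strict tangent-plane inequality for strictly convex $C^1$ functions, namely $\psi(x) > \psi(0) + \nabla\psi(0)\cdot x = 0$ whenever $x\ne 0$.

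Next, by continuity of $\psi$ and compactness of the unit sphere $S^{n-1}$, I would set $m := \min_{|v|=1}\psi(v)$; the previous step forces $m>0$. For any $x$ with $|x|=r\ge 1$, write $\hat x = x/r$ and apply convexity on the segment from $0$ to $x$ at the interior point $\hat x = (1/r)\,x + (1-1/r)\cdot 0$:
\[
\psi(\hat x)\le \frac{1}{r}\,\psi(x)+\Big(1-\frac{1}{r}\Big)\psi(0)=\frac{1}{r}\,\psi(x).
\]
Rearranging gives $\psi(x)\ge r\cdot \psi(\hat x)\ge r m\to+\infty$ as $|x|\to\infty$, which is the desired limit.

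For properness, I would use this growth estimate to show every sublevel set $\{\psi\le M\}$ is bounded; combined with continuity (which yields closedness), sublevel sets are compact. The preimage of any compact subset of $\mathds{R}$ under $\psi$ is closed in $\mathds{R}^n$ and contained in some sublevel set, hence compact, so $\psi$ is proper. The only subtle point --- and the main potential obstacle --- is upgrading ray-wise growth to uniform growth in all directions. Along any single ray, strict convexity together with the vanishing gradient at $0$ already forces $\psi$ to tend to infinity, but the rate could in principle degenerate as the direction varies; evaluating convexity precisely at the unit-sphere point $\hat x=x/|x|$ converts the one-dimensional argument into the uniform linear bound $\psi(x)\ge m|x|$ via the sphere minimum $m$.
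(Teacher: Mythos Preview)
Your proof is correct and follows essentially the same route as the paper: both reduce the question to controlling the infimum of $\psi$ over spheres via strict convexity and compactness. The paper introduces $h(t)=\inf_{|x|=t}\psi(x)$, asserts it is strictly increasing and tends to infinity, and defers the details to \cite{Ge2}; your argument is in fact a cleaner, self-contained execution of the same idea, since by fixing the single sphere $S^{n-1}$ and applying the convexity inequality at $\hat x=x/|x|$ you obtain the explicit linear bound $\psi(x)\ge m\,|x|$ for $|x|\ge 1$, from which both $\psi\to+\infty$ and properness follow immediately.
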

\begin{proof}
Set $h(t)\triangleq \underset{|x|=t}{\,\inf}\,\,\psi(x)$, $t\geq 0$. Then $h(t)$ is a strictly monotone increasing function. We just need to prove $h(t)\rightarrow +\infty$, as $t\rightarrow +\infty$. The process is almost the same with Lemma B.1 in \cite{Ge2}, we omit the details. Notice that there is another method to prove Lemma \ref{proper} by remark 2.7 in \cite{BO1}. \qed
\end{proof}

\begin{theorem} \label{Ricci-potential-conveg-infty}
Given $(X,T,\Phi)$, where $X$, $T$ and $\Phi$ are defined as before. Assuming there exists a constant curvature metric $r_{av}\in\mathds{R}_{>0}^N$. Consider the combinatorial Ricci potential
\begin{equation*}
f(u)\triangleq\int_{u_{av}}^u\sum_{i=1}^N\big(K_i-k_{av}\big)du_i\,,\,\,u\in \mathds{R}^N,
\end{equation*}
where $u_{av}=Exp^{-1}(r_{av})$. Then for arbitrary constant $a$, $f\big|_{\mathscr{U}_a}$ is proper and
$$\lim\limits_{u\rightarrow \infty,\,u\in\mathscr{U}_a}f(u)=+\infty.$$
\end{theorem}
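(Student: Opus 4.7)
The plan is to verify the two hypotheses of Lemma \ref{proper} for $f$ restricted to $\mathscr{U}_a$, viewed as a $C^2$ function on an $(N-1)$-dimensional affine subspace. Since $f$ is defined by the line integral of the closed form $\sum_i(K_i - k_{av})\,du_i$, we have $\nabla_u f = K - K_{av}$ and $\mathrm{Hess}_u f = L$.

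First I would check strict convexity of $f\big|_{\mathscr{U}_a}$. The tangent space to $\mathscr{U}_a$ is $\mathscr{V} := \{v\in\mathds{R}^N : \sum_i v_i = 0\}$. By Proposition \ref{L-semi-positive-rank-ker}, $L$ is symmetric, positive semi-definite, with one-dimensional kernel spanned by $(1,\ldots,1)^T$. Because $(1,\ldots,1)^T\notin\mathscr{V}$, the restricted quadratic form $L|_{\mathscr{V}}$ is strictly positive definite at every point, so the Hessian of $f\big|_{\mathscr{U}_a}$ is positive definite everywhere. Hence $f\big|_{\mathscr{U}_a}$ is strictly convex.

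Next I would exhibit a critical point of $f\big|_{\mathscr{U}_a}$. By assumption the constant curvature metric $r_{av}$ exists, so $u_{av} = Exp^{-1}(r_{av})$ satisfies $K(u_{av}) = K_{av}$ and hence $\nabla f(u_{av}) = 0$. Since scaling all $r_i$ by a common factor --- equivalently, translating $u$ by a multiple of $(1,\ldots,1)^T$ --- leaves every $K_i$ invariant (which is the geometric content of $L\cdot(1,\ldots,1)^T = 0$), the point $u_a := u_{av} + \tfrac{1}{N}\bigl(a - \sum_i u_{av,i}\bigr)(1,\ldots,1)^T$ lies in $\mathscr{U}_a$ and still satisfies $K(u_a) = K_{av}$, so $\nabla f(u_a) = 0$. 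In particular $u_a$ is a critical point of $f\big|_{\mathscr{U}_a}$. Lemma \ref{proper} (applied after identifying $\mathscr{U}_a$ with $\mathds{R}^{N-1}$) then delivers both properness and $\lim_{u\to\infty,\,u\in\mathscr{U}_a} f(u) = +\infty$.

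The only non-routine ingredient is the strict convexity along $\mathscr{U}_a$, but this is immediate from the rank and kernel description of $L$ already recorded in Proposition \ref{L-semi-positive-rank-ker}; everything else is a direct transcription of the hypotheses of Lemma \ref{proper}. The main conceptual point to remember is that $f$ fails to be strictly convex on all of $\mathds{R}^N$ --- it is flat along the scaling direction $(1,\ldots,1)^T$ --- so one cannot apply Lemma \ref{proper} to $f$ itself; the restriction to the transverse hyperplane $\mathscr{U}_a$ is essential.
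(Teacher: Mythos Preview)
Your proposal is correct and follows essentially the same approach as the paper: verify strict convexity of $f|_{\mathscr{U}_a}$ (via the kernel of $L$ being transverse to the tangent hyperplane) and exhibit a critical point, then invoke Lemma~\ref{proper}. The paper carries out the convexity step by an explicit orthogonal change of coordinates and a block computation of $ALA^T$, and handles general $a$ by first observing $f(u)=f(u+t\alpha)$ to reduce to $a=0$; your version replaces both of these by the equivalent but more direct observations that $L|_{\mathscr{V}}$ is positive definite and that the translate $u_a$ of $u_{av}$ along $(1,\dots,1)^T$ is a critical point in $\mathscr{U}_a$.
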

\begin{proof}
Write $n=N-1$ for convenience, and denote $\alpha^T=\frac{1}{\sqrt{N}}(1,\cdot\cdot\cdot,1)^T$ as the unit normal vector perpendicular to the hyperplane $\mathscr{U}_a$.
Then we have $f(u)=f(u+t\alpha^T)$ for all $t\in \mathds{R}$. Thus we only need to prove above theorem for $a=0$. Write $\mathscr{U}_0$ as $\mathscr{U}$ for short.
The hyperplane $\mathscr{U}$ is determined by $n$ linear independent variables.
We claim that $f\big|_{\mathscr{U}}$ is strictly convex when considered as a function of $n$ independent variables.

Denote $\mathds{R}^n_{\zeta}\triangleq\{\zeta=(\zeta_1,\cdots,\zeta_N)^T\in\mathds{R}^N|\zeta_N=0)\}$. Select an orthogonal linear transformation $\mathscr{A}:\mathds{R}^N\rightarrow\mathds{R}^N$, such that $\mathscr{A}\alpha^T=(0,\cdots,0,1)^T$ and, moreover, $\mathscr{A}$ transforms $\mathscr{U}$ to $\mathds{R}^n_{\zeta}$ one by one. The matrix form of $\mathscr{A}$ under standard orthogonal basis of $\mathds{R}^N$ is denoted by $A$.
Set $g=f\big|_{\mathscr{U}}\circ \mathscr{A}^{-1}\big|_{\mathds{R}^n_{\zeta}} $, \emph{i.e.},
$$g(\zeta_1,\cdots,\zeta_n)\triangleq f\Big(A^T(\zeta_1,\cdots,\zeta_n,0\big)^T\Big).$$
We want to show that $Hess(g)$ is a $n\times n$ positive definite matrix. Partition the matrix $A$ into two blocks
\begin{displaymath}
A=
\begin{bmatrix}
S \\ \alpha_N
\end{bmatrix},
\end{displaymath}
where $S$ is a $n\times N$ matrix.
By calculation we get $\nabla f=K-K_{av}$, $Hess(f)=\frac{\partial(K_1,...,K_N)}{\partial(u_1,...,n_N)}=L$, $\nabla g=S \nabla f=S(K-K_{av})$ and $Hess(g)=SLS^T$.
Because $A\alpha^T=(0,\cdots,0,1)^T$, we have $\alpha=(0,\cdots,0,1)A=\alpha_N$. Using $L\alpha^T=0$, we obtain
\begin{displaymath}
ALA^T=
\begin{bmatrix}
S \\ \alpha
\end{bmatrix}
L\left[S^T,\alpha^T\right]=
\begin{bmatrix}
SLS^T & SL\alpha^T \\
\alpha LS^T & \alpha L\alpha^T
\end{bmatrix}
=
\begin{bmatrix}
SLS^T & 0\,\,\\
0 & 0\,\,
\end{bmatrix}.
\end{displaymath}
On one hand, $SLS^T$ is positive semi-definite, since $ALA^T$ is positive semi-definite. On the other hand, $rank(SLS^T)=rank(ALA^T)=rank(L)=n$. Therefore $Hess(g)=SLS^T$ is positive definite. Moreover, $f\big|_{\mathscr{U}} (=g\circ \mathscr{A}\big|_{\mathscr{U}})$ is strictly convex when considered as a function of $N-1$ variables. Thus we get the claim above. The proof can be finished by using Lemma \ref{proper}.
\end{proof}
\qed

Huabin Ge

School of Mathematical Sciences, Peking Univ., Beijing 100871, PR China

Email: gehuabin@pku.edu.cn

\end{document}